\documentclass[12pt]{extarticle}
\usepackage{amsmath, amsthm, amssymb, color}
\usepackage[backref=page]{hyperref}
\usepackage{graphicx}
\usepackage[all]{xypic}
\usepackage{verbatim}
\usepackage{tikz}
\usepackage{booktabs}
\usepackage{cite}
\usepackage{caption}
\usepackage{subcaption}
\usepackage{listings}
\usepackage{cleveref}
\usepackage[format=plain, font=footnotesize]{caption}
\usepackage{enumitem}
\tolerance 10000
\headheight 0in
\headsep 0in
\evensidemargin 0in
\oddsidemargin \evensidemargin
\textwidth 6.5in
\topmargin .25in
\textheight 8.7in

\definecolor{turqouise}{rgb}{0.07, 0.41, 0.35}
\definecolor{linkblue}{rgb}{0.1, 0.46, 0.82}
\hypersetup{
  colorlinks=true,
  citecolor=turqouise,
  linkcolor=linkblue,
  urlcolor=black
}

\synctex=1

\usepackage{makecell}
\usepackage{array}
\newcolumntype{?}{!{\vrule width 1pt}}

\graphicspath{ {figures/} }

\newtheorem{theorem}{Theorem}
\numberwithin{theorem}{section}
\newtheorem{proposition}[theorem]{Proposition}
\newtheorem{lemma}[theorem]{Lemma}

\newtheorem{definition}[theorem]{Definition}

\newtheorem{conjecture}[theorem]{Conjecture}
\theoremstyle{definition}

\newtheorem{remark}[theorem]{Remark}

\makeatletter
\newcommand{\subjclass}[2][1991]{%
  \let\@oldtitle\@title%
  \gdef\@title{\@oldtitle\footnotetext{#1 \emph{Mathematics Subject Classification.} #2}}%
}
\newcommand{\keywords}[1]{%
  \let\@@oldtitle\@title%
  \gdef\@title{\@@oldtitle\footnotetext{\emph{Key words and phrases.} #1.}}%
}
\makeatother

\date{}

\title{\textbf{Critical Curvature of Algebraic Surfaces\\ in Three-Space}
\author{Paul Breiding, Kristian Ranestad, and
Madeleine Weinstein}}

\keywords{curvature, enumerative geometry, umbilics, real algebraic variety}
\subjclass[2020]{14N10}

\begin{document}

\maketitle
\setcounter{page}{1}

\begin{abstract}
 \noindent
We study the curvature of a smooth algebraic surface $X\subset \mathbb R^3$ of degree $d$ from the point of view of algebraic geometry. More precisely, we consider umbilical points and points of critical curvature. We prove that the number of complex critical curvature points is of order $d^3$. For general quadrics, we fully characterize the number of real and complex umbilics and critical curvature points.
 \end{abstract}

\section{Introduction}

Curvature is one of the most significant objects of study in differential geometry. This paper takes an unusual approach to curvature in that we study it from the perspective of enumerative geometry. We \emph{count} points where the curvature exhibits a special property. 

For the purpose of illustration, we first consider the case of plane curves. At each point on a plane curve, there is a best-fit circle called the \textit{osculating circle}. The radius of the osculating circle is called the \textit{radius of curvature}. Points on a curve where the radius of curvature achieves a local maximum or minimum are called \textit{critical curvature points}. In work published in 1865 \cite{Sal65}, George Salmon showed that a general algebraic plane curve of degree $d$ has $6d^2-10d$ complex critical curvature points. 

In this article, we attempt to extend Salmon's work to a general surface 
$$X=\{x\in\mathbb R^3 \mid f(x)=0\} \subset \mathbb R^3,$$
where $f(x)$ is a \emph{general} polynomial in three variables $x=(x_1,x_2,x_3)$ of degree $d$. 

Every point on a smooth surface in $\mathbb{R}^3$ has two principal curvatures. 
Similar to the case of curves, we call a point on $X$ where one of the two principal curvatures achieves a critical value a \emph{critical curvature point}. We also consider a second type of point: umbilics. A point where the two principal curvatures of $X$ are equal is called an \emph{umbilical point}. At such a point, the best second-order approximation of the surface is a sphere. 
Our first main result is that all umbilics are critical curvature points. We prove the next theorem in \Cref{umbilical_are_cc_proof}.
\begin{theorem}\label{umbilical_are_cc}
Let $x$ be a umbilical point of $X\subset \mathbb R^3$. Then $x$ is also an isolated critical curvature point of $X$. 
\end{theorem}
This result implies that the number of umbilical points is a lower bound for the number of critical curvature points. The former was computed by Salmon in \cite{Sal65} for general surfaces. Let us recall his result.
\begin{theorem}[Salmon \cite{Sal65}]\label{salmons_theorem} 
A general surface of degree $d$ in $\mathbb{R}^3$ has finitely many umbilics. The number of complex umbilical points is $10d^3-28d^2+22d$.
\end{theorem}
By complex umbilical points we mean the following. In Section \ref{sec:curvature_algebraic}, we derive polynomial equations for umbilical points. Their complex zeros are complex umbilical points  (see Definition \ref{def_complex_points}). We define complex critical curvature points similarly. 
In \Cref{thm:curvatureupperbound}, we prove that
$\frac{699}{2}d^3-\frac{1611}{2}d^2+462d$ is an upper bound for the number of isolated complex critical curvature points of $X$. We conclude that the number of complex critical curvature points grows as $d^3$. 
\begin{theorem}\label{cor_growth}
  Let $X \subset \mathbb{R}^3$ be a general algebraic surface of degree $d$. Then the number $N(d)$ of isolated complex critical curvature points grows as $d^3$. More precisely, 
  $$10\leq \liminf_{d\to\infty}\, \frac{N(d)}{d^3} \quad \text{and}\quad \limsup_{d\to\infty}\, \frac{N(d)}{d^3} \leq \frac{699}{2}.$$
\end{theorem}

\Cref{sec:quadrics} characterizes the number of real and complex umbilical points and critical curvature points in the special case of quadric surfaces. We prove in \Cref{thm_quadrics_umbilics} that a general quadric surface in $\mathbb R^3$ has 12 umbilical points and either 0 or 4 of them are real. More specifically, a general ellipsoid or two-sheeted hyperboloid has 4 real umbilics, while a one-sheeted hyperboloid has no real umbilics. Examples of this can be seen in \Cref{fig1}.  Furthermore, we prove in \Cref{thm_cc} that a general quadric surface in $\mathbb R^3$ has 18 complex critical curvaure points. Here, an ellipsoid has 10, a one-sheeted hyperboloid has 4, and a two-sheeted hyperboloid has 6 real critical curvature points. The case of an ellipsoid recovers results in \cite{Gangopadhyay}. In particular, this shows that neither all complex umbilical points nor all complex critical curvature points can be real. We suspect the same is true for surfaces of higher degree.
\begin{conjecture}
Neither the complex umbilics nor the complex critical curvature points of a general surface of degree $d\geq 3$ in $\mathbb{R}^3$ can all be real. 
\end{conjecture}
This is reminiscent of the classical result of Klein \cite{Klein1876} that a general curve in~$\mathbb R^2$ of degree~$d$ has $3d(d-2)$ complex inflection points, but that not all of them can be real (in fact, at most a third of the complex inflection points can be real). 

The rest of the paper is organized as follows: in the next section, we formulate polynomial equations for principal curvatures. In \Cref{sec:crit_curv}, we compute an upper bound for the number of complex critical curvature points.  In \Cref{sec:quadrics}, we compute umbilics and critical curvature explicitly for quadrics in $\mathbb R^3$. 

Previous work on counting curvature points includes \cite{VCMAGPC, piene2021return,  Gangopadhyay, Emil, Kazarian, KUV}. The papers \cite{VCMAGPC} and \cite{piene2021return} address critical curvature in the case of plane curves. In \cite{VCMAGPC}, the problem is approached from a computational perspective. The paper \cite{piene2021return} describes the correspondence between the critical curvature points on a plane curve and the cusps on its evolute. The discussion around \cite[Problem 2]{piene2021return} mentions that the possible numbers of real critical curvature points on a plane curve are not known. The paper \cite{Emil} describes an algorithm to compute the critical curvature points and their degree of a higher dimensional variety. The local geometry of umbilical points on surfaces in $\mathbb{R}^3$ is classified by Gautam Gangopadhyay \cite{Gangopadhyay}.

\begin{figure}
\begin{center}
\includegraphics[width = 0.3\textwidth]{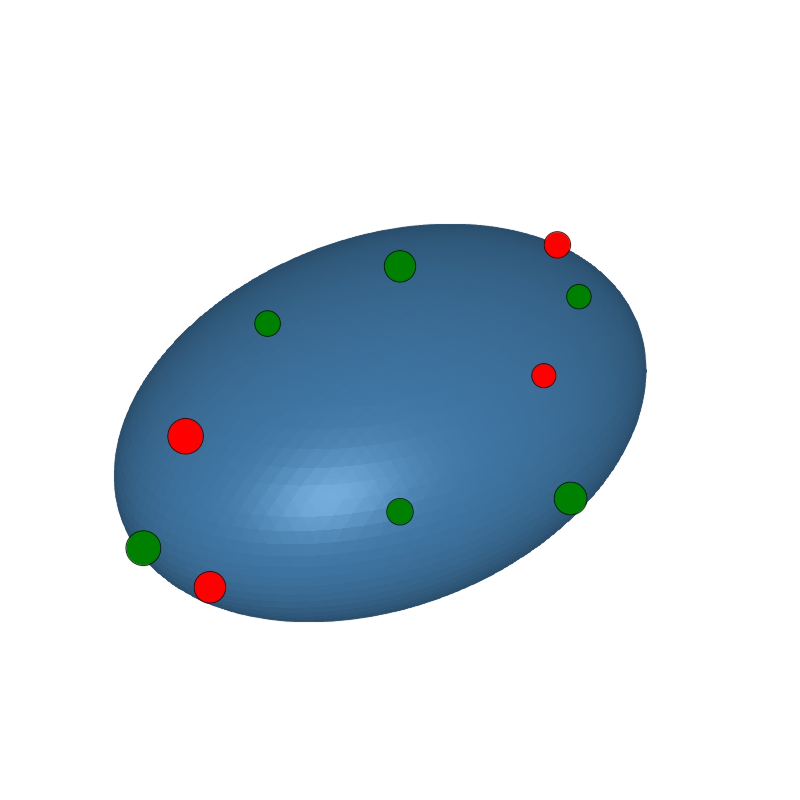}
\includegraphics[width = 0.3\textwidth]{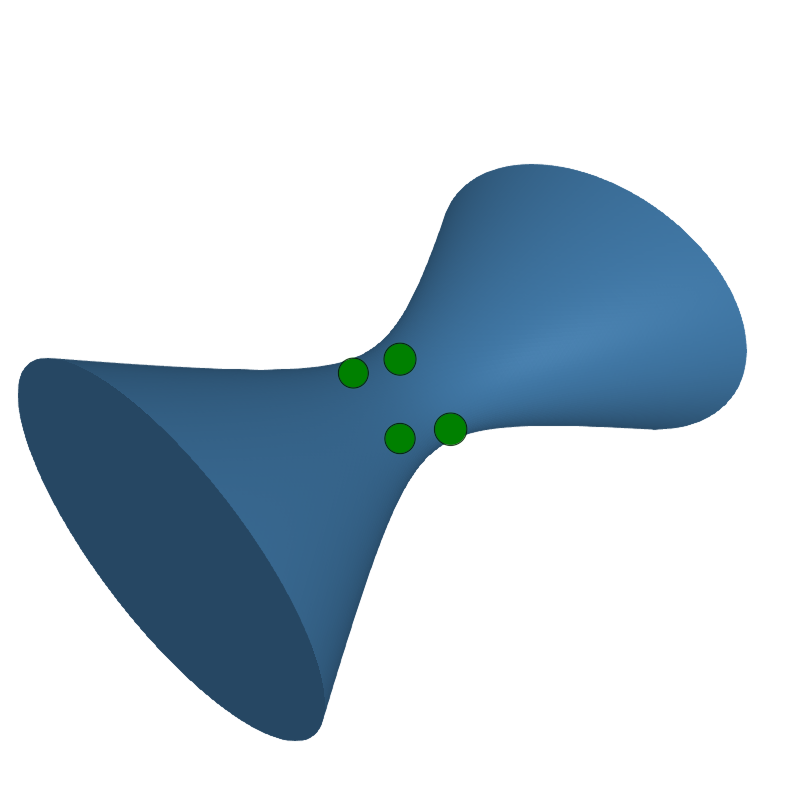}
\includegraphics[width = 0.3\textwidth]{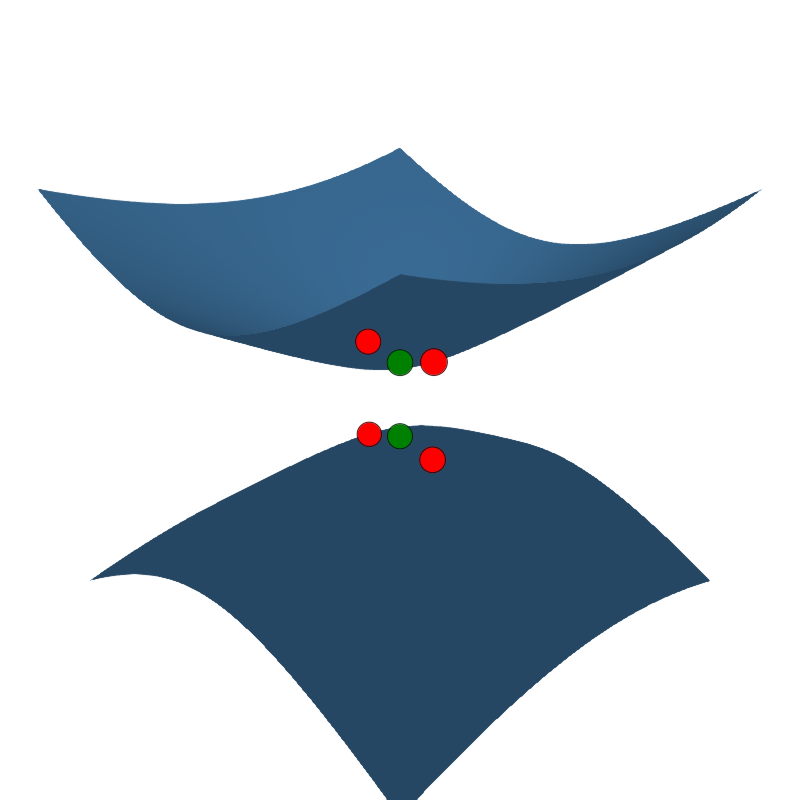}
\end{center}
\caption{\label{fig1} The pictures show the three quadric surfaces $X_1=\{x_1^2 + 2x_2^2 + 4x_3^2= 1\}$ (ellipsoid, left picture) and $X_2 = \{-x_1^2 + 2x_2^2 + 4x_3^2= 1\}$ (one-sheeted hyperboloid, picture in the middle) and $X_3=\{-2x_1^2 - x_2^2 + 4x_3^2= 1\}$ (two-sheeted hyperboloid, right picture). The umbilical points are shown in red and the critical curvature points are shown in green. All these points have (at least) one coordinate equal to zero. This is not a coincidence, but follows from \Cref{distinctai}.
The ellipsoid has~4 real umbilics and 10 real critical curvature points,
the one-sheeted hyperboloid has no real umbilics and 4 real critical curvature points, and the two-sheeted hyperboloid has 4 real umbilics, and 6 real critical curvature points. The pictures were created using the software packages \texttt{Makie.jl} \cite{Makie} and \texttt{Implicit3DPlotting.jl} \cite{Himmelmann2021}.}
\end{figure}

\bigskip
\section{Curvature of an Algebraic Hypersurface}\label{sec:diff_geo}
We give a brief introduction to the curvature of algebraic hypersurfaces. For more details see the textbooks \cite{Lee, doCarmo} and, in particular, \cite[Chapter 6]{BKS}.

We consider a general hypersurface $X=\{x\in\mathbb R^n \mid f(x)=0\}$ of degree~$d$. The relevant case for our paper is $n=3$. 
We assume that $f$ has no complex singular zeros. This means that the system of $n+1$ polynomial equations $f(x) = \tfrac{\partial f(x)}{\partial x_1} = \cdots = \tfrac{\partial f(x)}{\partial x_n} =0$ in $n$ variables $x=(x_1,\ldots,x_n)$ has no solution in~$\mathbb C^n$. We denote the gradient $\nabla f(x) = (\tfrac{\partial f(x)}{\partial x_1},\ldots, \tfrac{\partial f(x)}{\partial x_n})^T$.

We denote the Euclidean inner product by 
$$\langle v,w\rangle := v^Tw.$$
The Euclidean norm is $\Vert v\Vert = \sqrt{\langle v,v\rangle}$.

Let $p\in X$ be a point and $v\in T_pX$, $\Vert v\Vert = 1$, be a tangent vector of unit norm. A unit normal vector of $X$ at $p$ is given by the normalized gradient $\Vert \nabla f(p)\Vert^{-1}\cdot \nabla f(p)$. The \emph{curvature} $c(p,v)$ of $X$ at $p$ in direction~$v$ is then the $v$-component of the directional derivative in direction $v$ of this normalized gradient. That is,
$$
c(p,v) = \left\langle v, \frac{\partial (\Vert \nabla f\Vert^{-1}\cdot \nabla f}{\partial v}(p) \right\rangle.
$$
The curvature can be expressed via the \emph{Hessian} of $f$ at $p$:
\begin{equation}\label{curvature_equation2}
c(p,v)= \frac{v^T\, \mathrm{H}f(p)\, v}{\Vert \nabla f(p)\Vert},
\end{equation}
where 
$$\mathrm{H}f(p) := \begin{bmatrix}
\frac{\partial^2 f}{\partial x_1^2}(p) & \cdots & \frac{\partial^2 f}{\partial x_1\partial x_n}(p)\\
\vdots  &\ddots &\vdots\\
\frac{\partial^2 f}{\partial x_1\partial x_n}(p) & \cdots & \frac{\partial^2 f}{\partial x_n^2}(p)
\end{bmatrix};$$
see, e.g., \cite[Example 6.14]{BKS}. For a fixed $p$, the right-hand side in  (\ref{curvature_equation2}) is a quadratic form $T_pX\times T_pX\to\mathbb R$, called the \emph{second fundamental form} of $X$ at $p$. Its eigenvalues are called the \emph{principal curvatures} of $X$ at $p$. The corresponding eigenvectors are called \emph{principal curvature eigenvectors}.

\subsection{Polynomial Equations for Principal Curvatures}\label{sec:curvature_algebraic}
We now give polynomial equations for the principal curvatures of $X$. A principal curvature eigenvector $u$ is an eigenvector of the 
quadratic form $T_pX\times T_pX\to\mathbb R$ given by~(\ref{curvature_equation2}) with eigenvalue $y_1$. We orthogonally decompose the vector $u$, separating out the normal component and thus obtaining the equation
$$
\mathrm{H}f(p) u + y_1 u + y_2 \nabla f(p)=0,$$
where $y_1,y_2\in\mathbb R$ are scalars. The curvature $c(p,v)$ involves the square root of 
\begin{equation}\label{def_eta}\eta(p) := \langle \nabla f(p),\nabla f(p)\rangle.\end{equation}
To obtain polynomial equations, we introduce a new variable~$\lambda$ and the polynomial equation  $\lambda^2 \eta(p)=1$. Then, by (\ref{curvature_equation2}) the curvature of $X$ at $p$ in the direction $v\in T_pX$ is given by~$\pm \lambda \, (v^T\mathrm H(p)v)$. The ambiguity of the sign does not concern us because principal curvatures are defined up to sign.

We thus have the following list of $n+3$ equations in the $2n+3$ variables $\{x_1, \dots, x_n, \lambda, u_1, \dots, u_n, y_1,y_2\}$ describing principal curvatures:
\begin{equation}\label{crit_curv_equations}
\begin{bmatrix}
\lambda^2 \eta(x)-1\\[0.3em]
\langle u, \nabla f(x)\rangle \\[0.3em]
\langle u,u\rangle -1=0\\[0.3em]
\, \mathrm{H}f(x) u + y_1u+y_2\nabla f(x)\,
\end{bmatrix} = 0.
\end{equation}
The second equation establishes that $u\in T_xX$ and the third equation that $u$ has norm $1$ (as eigenvectors are defined only up to scaling). We denote
\begin{equation}\label{def_C}
\mathcal C(f):=\{(x,\lambda, u, y)\in X \times\mathbb R\times \mathbb R^n \times \mathbb R^2 \mid (x,\lambda, u, y)\text{ satisfies the equations in (\ref{crit_curv_equations})}\}.
\end{equation}
This is a real algebraic variety, which we call the \emph{curvature variety} of $X$. We have the following observation.
\begin{proposition}\label{curv_proposition}
Let $\pi: \mathcal C(f) \to X, (x,\lambda,u,y)\mapsto x$ be the projection onto the first coordinate. For any fixed $x\in X$, the points in the preimage $(x,\lambda, u, y)\in\pi^{-1}(x)$ are points such that $u\in T_xX$ is a principal curvature eigenvector of $X$ at $x$. Its principal curvature is 
$$g(x,\lambda,u,y):= \lambda \, u^T \mathrm{H}f(x) u.$$
Equivalently,
$$g(x,\lambda,u,y) = -\lambda \, y_1.$$
\end{proposition}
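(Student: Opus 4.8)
The plan is to unwind the defining equations of $\mathcal C(f)$ in \eqref{crit_curv_equations} and match them against the differential-geometric description of principal curvatures recalled above. First I would fix a point $(x,\lambda,u,y)\in\pi^{-1}(x)$ and read off the geometric content of each equation: the third equation $u^T\nabla f(x)=0$ places $u$ in $T_xX$, the fourth normalizes $u$ to unit length, and the constraint $\lambda^2\eta(x)=1$ records that $|\lambda|=1/\sqrt{\eta(x)}$. The crucial equation is the last (vector) one, $\mathrm Hf(x)u+y_1u+y_2\nabla f(x)=0$. Since $\nabla f(x)$ spans $N_xX$ by \eqref{normal_space}, projecting this relation orthogonally onto $T_xX$ shows that $\mathrm P_{T_xX}(\mathrm Hf(x)u)=-y_1u$, i.e.\ $u$ is an eigenvector of the restriction of $\mathrm Hf(x)$ to $T_xX$. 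Because the self-adjoint operator associated to $h_x$ via the Euclidean metric is exactly $-\mathrm P_{T_xX}\circ\mathrm Hf(x)|_{T_xX}$ (recall $h_x(v)=-v^T\mathrm Hf(x)v$ from \eqref{def_h}), this is the same as saying that $u$ is a principal curvature direction, the associated principal curvature being governed by $y_1$.

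It then remains to compute the curvature in direction $u$ and identify it with $|g|$. By the discussion culminating in \eqref{curvature_equation2}, the curvature of $X$ at $x$ in a unit tangent direction $v$ equals $|h_x(v)|/\sqrt{\eta(x)}=|v^T\mathrm Hf(x)v|/\sqrt{\eta(x)}$. Applying this with $v=u$ and substituting $1/\sqrt{\eta(x)}=|\lambda|$ gives curvature $=|\lambda|\,|u^T\mathrm Hf(x)u|=|\lambda\,u^T\mathrm Hf(x)u|=|g(x,\lambda,u,y)|$, which is the first assertion. For the equivalent expression $g=\lambda y_1$, I would contract the vector equation on the left with $u^T$: the scalar constraints $u^Tu=1$ and $u^T\nabla f(x)=0$ collapse $u^T(\mathrm Hf(x)u+y_1u+y_2\nabla f(x))=0$ to a relation between $u^T\mathrm Hf(x)u$ and $y_1$, and substituting this into $g=\lambda\,u^T\mathrm Hf(x)u$ yields the claimed identity, with the sign matching the orientation convention implicit in \eqref{crit_curv_equations} (the sign being immaterial for the curvature, which only involves $|g|$).

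The hard part will not be any lengthy computation but getting the first step exactly right: one must carefully justify that the Lagrange-multiplier-type relation in \eqref{crit_curv_equations} genuinely encodes the eigenvalue problem for the shape operator on $T_xX$, and in particular that the component along $\nabla f(x)$ (carried by $y_2$) drops out precisely because $u$ is tangent. This is what guarantees that $y_1$ is the actual eigenvalue of the operator attached to $h_x$ rather than merely a multiplier, and hence that every fiber point records a principal curvature direction together with the correct (un-normalized) principal curvature. Once this identification is secured, both the formula for the curvature and the equality $g=\lambda y_1$ follow by the elementary contractions described above.
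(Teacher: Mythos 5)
Your proof is correct and follows essentially the same route as the paper: identify the last equation of \eqref{crit_curv_equations} as the eigenvalue problem for the tangential restriction of the Hessian, invoke \eqref{curvature_equation2} together with $\vert\lambda\vert=1/\sqrt{\eta(x)}$ to get the curvature $\vert g\vert$, and contract the vector equation with $u^T$ (using $u^Tu=1$, $u^T\nabla f(x)=0$) to obtain the relation between $u^T\mathrm{H}f(x)u$ and $y_1$. The only difference is that you spell out, via the orthogonal projection onto $T_xX$, the eigenvector identification that the paper treats as holding ``by definition'' from its setup in Section 2.3, and you handle the residual sign ambiguity (the contraction actually gives $u^T\mathrm{H}f(x)u=-y_1$) the same way the paper does, namely by noting it is immaterial since only $\vert g\vert$ carries geometric meaning.
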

\begin{proof}
The $u$ in $(x,\lambda, u, y)\in\pi^{-1}(x)$ is a principal curvature eigenvector at $x\in X$ by definition. By (\ref{curvature_equation2}), the curvature in direction $u$ is given by
$$c(x,u)=\frac{u^T \mathrm{H}f(x) u}{\sqrt{\eta(x)}}.$$ We combine this with $\lambda^2 = 1/\eta(x)$.
Finally, multiplying the last entry in \eqref{crit_curv_equations} from the left by $u^T$ shows that~$u^T \mathrm{H}f(x) u = -y_1$.
\end{proof}

Umbilical points of $X$ are points where two of the principal curvatures coincide. Moreover, critical curvature points of $X$ are points where one of the principal curvatures attains a critical value. 
We now give an algebraic definition using the curvature variety of $X$.
\begin{definition}[Algebraic Definition of Umbilical and Critical Curvature Points]\label{def_points}
Let $X$ be a smooth algebraic hypersurface in $\mathbb R^n$ defined by a polynomial $f$. Let $\mathcal C(f)$ be its curvature variety and denote the projection $\pi:\mathcal C(f)\to X$. A point $x\in X$ is called
\begin{enumerate}
\item an {umbilical point} if there exists $(x,\lambda,u,y), (x,\lambda',u',y')\in \pi^{-1}(x)$ with $u' \neq \pm u$ such that $y_1 = y_1'$;
\item a {critical curvature point} if there exists $(x,\lambda,u,y)\in \pi^{-1}(x)$ such that $(x,\lambda,u,y)$ is a critical point of $g$.
\end{enumerate}
\end{definition} 
Let $\mathcal P_{n,d}$ be the vector space of real polynomials in $n$ variables of degree at most $d$.
We collect umbilics and critical curvature points of all smooth hypersurfaces in
$$U_{n,d}:=\bigcup_{f\in\mathcal P_{n,d}\, :\, \text{Sing}(f) = \emptyset} \{f\}\times U(f) \quad \text{and} \quad Cr_{n,d}:=\bigcup_{f\in\mathcal P_{n,d}\, :\, \text{Sing}(f) = \emptyset} \{f\}\times Cr(f);$$
i.e.,  $U_{n,d}$ and $Cr_{n,d}$ are semialgebraic sets of $\mathcal P_{n,d}\times \mathbb R^n$.

Next, we define complex umbilical and complex critical curvature points as subvarieties of the complex Zariski closures  \cite{whitney}  of $U_{n,d}$ and $Cr_{n,d}$.
\begin{definition}[Complex Umbilical and Critical Curvature Points]\label{def_complex_points}
We denote the complex Zariski closures of the real algebraic varieties $U_{n,d}$ and $Cr_{n,d}$ by $\overline{U_{n,d}}$ and $\overline{Cr_{n,d}}$.
\begin{enumerate}
\item For a fixed hypersurface $X=\{f=0\}$ and every $(f,x)\in\overline{U_{n,d}}$, we call $x$ a complex umbilical point of $X$.
\item For a fixed hypersurface $X=\{f=0\}$ and every $(f,x)\in\overline{Cr_{n,d}}$, we call $x$ a complex critical curvature point of $X$.
\end{enumerate}
\end{definition}
The two definitions might seem unnecessarily heavy at first sight. Let us walk slowly through their components to see why they are reasonable.

First, the reason why we do not simply define complex umbilical or critical curvature points by replacing $\mathbb R$ by $\mathbb C$ in the definition of $\mathcal C(f)$ is as follows. Real umbilical points are points where the second fundamental form has a repeated eigenvalue. However, the matrix discriminant of symmetric matrices is a sum-of-squares (see, e.g., \cite{Ilyushechkin2005, Parlett2002}) and so the real variety of matrices with repeated eigenvalues is contained in the singular locus defined by the matrix discriminant. We will see this explicitly, when we work out the case of quadrics in Section \ref{sec:quadrics}. Therefore, if we simply replaced $\mathbb R$ by $\mathbb C$ in the definition of $\mathcal C(f)$, we would obtain a variety in which the real umbilical points are singular. To avoid this, we first define the real locus and then take the complex Zariski closure.

\begin{remark}\label{remark_cc}
In the case of complex critical curvature points, the above approach is not necessary. Instead, we can consider the system of polynomial equations, where the Jacobian of the system defining the curvature variety and the gradient of $g$ are linearly dependent; i.e. where the $(2n+3)\times (n+5)$-matrix $A$ in \eqref{def_A} below has rank at most $n+4$. 
\end{remark}

Secondly, before taking the complex Zariski closure, we take the union of $U_{n,d}(f)$ and $Cr_{n,d}(f)$ over all~$f$. The reason for this is that real hypersurfaces can potentially have no umbilical points or critical curvature points. For instance, the real locus of the hypersurface could be empty (consider the hypersurface defined by~$x_1^2 + x_2^2 + 1$). To get a well-defined count over the complex numbers for general polynomials, we have to define complex umbilical and complex critical curvature points also for complex hypersurfaces without real umbilical or real critical curvature points.

\subsection{Umbilics are Critical Curvature Points }\label{umbilical_are_cc_proof}
We now focus on the case $n=3$; i.e., surfaces in three-space. The first step for proving 
\Cref{umbilical_are_cc} is the following lemma.

\begin{lemma}\label{lem_smooth}
In the case $n=3$, for general $f$ the curvature variety $\mathcal C(f)$ is smooth and of dimension $2$. 
\end{lemma}
\begin{proof}
Let $(x,\lambda,u,y)\in \mathcal C(f)$. We have 
$$\lambda = \pm (\eta(x))^{-1/2},\quad y_1 = -u^T\mathrm H f(x)u, \quad y_2 = -\frac{\nabla f(x)^T\mathrm H f(x)u}{ \eta(x)}.$$
Thus, $\lambda$ and $y$ are smooth functions in $x$ and $u$. If $x$ is not an umbilical point, $u$ is a smooth function of $x$. In this case, locally around $(x,\lambda,u,y)$  we have that $\mathcal C(f)$ is smooth and of dimension $2$. Now, assume that $x$ is an umbilical point. We have the smooth curve $(x,\lambda, u(t),y(t))\in\mathcal C(f)$, where $u(t)$ parametrizes the unit circle in $T_xX$. On the other hand, if we have a smooth curve $x(t)$ in $X$ with $x(0)=x$ that is not constant, we have just shown that this induces a curve $\gamma(t)=(x(t),\lambda(t),u(t),y(t))\in\mathcal C(f)$ with $\lim_{t\to 0} (x(t),\lambda(t),u(t),y(t)) = (x,\lambda,u,y)$ that is smooth for $t\neq 0$. 
Let $v:=\lim_{t\to 0 } \frac{\partial u(t)}{\partial t}$, then
by \cite[Proposition 12.3]{porteous}, $\lim_{t\to 0} \frac{\partial x(t)}{\partial t}$ is uniquely determined by $v$. This shows that $\gamma(t)$ is in fact smooth. Therefore, $\mathcal C(f)$ is also smooth at umbilical points.
\end{proof}

\begin{proof}[Proof of Theorem \ref{umbilical_are_cc}]
Let $x\in X=\{f=0\}$ be an umbilical point of $X$. We prove that $x$ is also a critical curvature point. 

In the tangent plane $H:=T_xX$ every $u\in H$ is a principal curvature eigenvector of $X$ at~$x$. Let $S^1$ be the unit circle in $H$. We construct a function $\psi:S^1\to\mathbb R$ as follows. 
For every unit tangent vector $v\in S^1$ we can find a unique (locally defined) geodesic $x(t)\in X$ with $\frac{\partial x(t)}{\partial t}|_{t=0}=v$. 
In the proof of \Cref{lem_smooth}, we have shown that this uniquely defines a smooth curve 
$\alpha(t):=(x(t),\lambda(t),u(t),y(t))\in\mathcal C(f)$. Let us write $x_t:=x(t)$ and $u_t:=u(t)$. Recall from \eqref{curvature_equation2} that the curvature given by $u_t$ along $x_t$
is
$$c(x_t,u_t) =\frac{u_t^T \mathrm Hf(x_t) u_t}{\Vert \nabla f(x_t)\Vert} 
.$$
Then, denoting by $\mathrm D^3f(x)$ the tensor of third-order derivatives of $f$ at $x$, and by $\dot u$ the derivative of $u_t$ at $t=0$ we have 
$$\frac{\partial c(x_t,u_t)}{\partial t}\Bigm|_{t=0} = \frac{2u^T \mathrm Hf(x) \dot u + \mathrm D^3f(x)(v, u,u)}{\Vert \nabla f(x)\Vert} + (u^T \mathrm Hf(x) u)\, \Big(-\frac{\partial}{\partial t} \Vert \nabla f(x_t)\Vert^{-1}\Big)\Big|_{t=0}.$$
We define
$$\psi(v):=\frac{\partial c(x_t,u_t)}{\partial t} \Bigm|_{t=0}.$$
If $\psi(u)=0$, we have that $x$ is a critical curvature point because the tangent space of $\mathcal C(f)$ is spanned by $\frac{\partial \alpha}{\partial t}|_{t=0}$ and the derivative of $(x,\lambda, u(t),y(t))$ with $u(t)\in S^1$, along which the curvature is constant. 

If $\psi(v)\neq 0$, we consider $\psi(-v)$, which is given by the curve $t\mapsto \alpha(-t)$. We have 
\begin{align*}
\psi(-v) &= \frac{2u^T \mathrm Hf(x) (-\dot u) + \mathrm D^3f(x)(-v, u,u)}{\Vert \nabla f(x)\Vert} + (u^T \mathrm Hf(x) u)\, \Big(-\frac{\partial}{\partial t} \Vert \nabla f(x_{-t})\Vert^{-1}\Big)\Big|_{t=0}\\
  & =-\psi(v).
  \end{align*}
Since $\psi$ is continuous, by the intermediate value theorem there must exist $w\in S^1$ with $\psi(w)=0$. Consequently, $x$ is a critical curvature point.

Finally, we prove that every umbilical point is an isolated critical curvature point. For this, we write $f(x_1,x_2,x_3)=0$  in Monge form locally around the umbilic $x=(0,0,0)$ (see \cite[Section 12.4]{porteous}):
$$x_3 = \frac{c}{2}\cdot (x_1^2+x_2^2) + (px_1^3 + qx_1^2x_2 + r x_1x_2^2 + sx_2^3) + O(x_1,x_2)^4.$$
Here, $c$ is the curvature at $x$ and $p,q,r,s$ are scalars depending on $f$. In \cite[Section 12.4]{porteous} it is shown that the second fundamental form around $x$ is given by the matrix 
$$\begin{pmatrix}c & 0 \\ 0 & c\end{pmatrix}  
+ x_1
\begin{pmatrix}p & q \\ q & r\end{pmatrix}
+ x_2\begin{pmatrix}q &r \\ r & s\end{pmatrix}
.$$
By Salmon's Theorem (\Cref{salmons_theorem}), a general $f\in \mathcal P_{n,d}$ has a finite number of umbilical points, so we fix one $f_0$, and a small real disk $D\subset \mathcal P_{n,d}$ centered at $f_0$, such that every $f\in D$ has the same finite number of real umbilical points.
 Over $D$, the map $U_{n,d} \to \mathcal P_{n,d}$ is finite and etale. Consider one of the umbilical points say $x_0\in \{f_0=0\}$, and the sheet $D_0\subset U_{n,d}$ over $D$ that it belongs to.  For $t \in D$, we get a $f_t$ and an umbilical point~$x_t\in\{f_t=0\}$.  
 Consider a tubular neighborhood of $D_0$, i.e.\ a neighborhood $\Delta_t$ in $\{f_t=0\}$ of $x_t$ for each~$t$. Using the implicit function theorem, this tubular neighborhood may be chosen to be the image of a map $${\rm id}_D\times\phi(t,s): D\times \Delta\to D\times\cup_{t\in D}\Delta_t; \; (t,s)\mapsto (t,\phi(t,s))\in t\times \Delta_t,$$ where $\Delta$ is a $2$-dimensional disc with parameter $s$. Then all entries in the fundamental forms on  $\Delta_t$ are functions in $t$ and $s$.  Now genericity in $t$ induces a genericity in fundamental forms and eigenvalues, so for a general $t\in D$ no point in  $\Delta_t$ is a critical curvature point. Since there is a finite number of umbilical points on $V(f_0)$ one can apply this argument to each umbilical point and find a $t$ such that $V(f_t)$ has no critical curvature points in the neighborhood of any its umbilical points.
 
\end{proof}

\bigskip
\section{An Upper Bound for Critical Curvature Points}\label{sec:crit_curv}
We study critical curvature points for surfaces in $\mathbb R^3$. Recall from (\ref{def_C}) the definition of the variety $\mathcal C(f)$ and from \Cref{curv_proposition} the curvature polynomial $g(x,\lambda,u,y)$. 

Salmon's Theorem (\Cref{salmons_theorem}) says that a general surface of degree $d$ in $\mathbb R^3$ has finitely many umbilical points. This implies using \Cref{umbilical_are_cc} that a general surface of degree $d$ has isolated critical curvature points, so the complex variety $\overline{Cr_{n,d}}$ from \Cref{def_complex_points} has a zero-dimensional component. The degree of this component is a well-defined number, and we can get an upper bound for the degree using Porteous' formula; see, e.g., \cite[Chapter 12]{3264}. This leads to the following result.

\begin{theorem}\label{thm:curvatureupperbound}
Let $X \subset \mathbb{R}^3$ be a general algebraic surface of degree $d$. Then, $X$ has isolated complex critical curvature points of $X$. An upper bound for their number is given by
$$\frac{699}{2}d^3-\frac{1611}{2}d^2+462d.$$
\end{theorem}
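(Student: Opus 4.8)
The plan is to realize the critical curvature points as the zeros of the differential of the curvature function $g$ on the curvature surface $\mathcal{C}(f)$ and to bound their number by a Chern-class (Porteous) computation. By the discussion preceding the statement, $\overline{Cr}$ has a zero-dimensional component, so for general $f$ the relevant model of the curvature variety is a surface on which $g=\lambda y_1$ restricts to a rational function whose isolated critical points are exactly the critical curvature points we want to count. On a smooth projective model $\mathcal{C}$ of this surface, $dg$ is a bundle map $T_{\mathcal{C}}\to L$ for a line bundle $L$ chosen to absorb the poles of $g$; its rank-zero locus has the expected codimension $2$, and by Porteous' formula its class is $c_2(\Omega^1_{\mathcal{C}}\otimes L)$. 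Since every genuine critical curvature point lies in this degeneracy locus, the degree of this class is an upper bound for their number, the gap coming from components of the locus that are not isolated relevant critical points.

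First I would fix a smooth model and compute its Chern classes. I would work inside the projectivized tangent bundle $\mathbb{P}(TX)$ of the projective surface $X\subset\mathbb{P}^3$, a smooth threefold with $\mathbb{P}^1$-bundle projection $\rho:\mathbb{P}(TX)\to X$. The principal-direction condition, namely that $\mathrm{H}f(x)u$ lie in the span of $u$ and $\nabla f(x)$ (cf.\ the last block of \eqref{crit_curv_equations}), is a relative quadric, so the eigendirection locus is a divisor meeting each fiber in two points; forgetting the auxiliary covers coming from the sign of $u$ and from $\lambda^2\eta=1$, this exhibits $\mathcal{C}$ as a double cover $\pi:\mathcal{C}\to X$ branched along the umbilic locus. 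Using genericity of $f$ I would verify that this divisor is smooth so that adjunction applies, and I would read off its class in $\mathbb{P}(TX)$ from the coefficients of the quadric (which involve $\mathrm{H}f$, of degree $d-2$). Together with the Chern classes of $TX$ — available from $K_X=(d-4)h_X$ exactly as in the proof of \Cref{salmon_lemma1} — this yields $c_1(\Omega^1_{\mathcal{C}})=K_{\mathcal{C}}$ and $c_2(\Omega^1_{\mathcal{C}})$.

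In parallel I would identify the twisting bundle $L$. Since $\lambda^2\eta(x)=1$, we have $g^2=y_1^2/\eta(x)$, so the poles of $g$ sit over the contact locus $\{\eta(x)=0\}\cap X$ (a curve of degree $2d(d-1)$) together with the part of $\mathcal{C}$ at infinity; $L$ is obtained by pulling these divisors back to $\mathcal{C}$, the presence of the square root $g=\pm y_1/\sqrt{\eta}$ and of the umbilic branch divisor being the likely source of the half-integer coefficients in the final polynomial. Expanding $c_2(\Omega^1_{\mathcal{C}}\otimes L)=c_2(\Omega^1_{\mathcal{C}})+c_1(\Omega^1_{\mathcal{C}})\,c_1(L)+c_1(L)^2$ and evaluating with the classes above gives a cubic in $d$; matching the intersection numbers produces the stated bound $\tfrac{699}{2}d^3-\tfrac{1611}{2}d^2+462d$. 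That this is only an upper bound reflects that $dg$ also vanishes along the excess locus — the base points of the twist, the fibers over $\{\eta=0\}$ and over infinity, and the ramification over the umbilics — each contributing a non-negative class that is being discarded.

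The hard part will be the bookkeeping of the pole and base locus of $g$: pinning down the exact line bundle $L$ and the precise multiplicities of $dg$ along $\{\eta=0\}$, along the umbilic branch divisor, and on the part of $\mathcal{C}$ at infinity, since any miscount there alters the cubic coefficient. A second obstacle is establishing that the smooth model $\mathcal{C}$ is genuinely smooth (or only mildly singular) for general $f$; the sum-of-squares phenomenon observed for quadrics in \Cref{sec:quadrics} suggests $\mathcal{C}$ may be singular precisely along the umbilic locus, in which case a resolution or a refined excess-intersection argument is needed before the Porteous class can be evaluated and interpreted as an honest upper bound.
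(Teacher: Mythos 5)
Your first step (isolatedness via \Cref{salmons_theorem} together with \Cref{umbilical_are_cc}) matches the paper, and your overall instinct---bound the count by a Porteous-type Chern class computation---is also the paper's strategy. But the model is different, and this is where the proposal breaks down as a proof of \emph{this} statement. The paper never constructs an intrinsic smooth model of the curvature variety: it works extrinsically, taking the closure $Y$ of $\mathcal C(f)$ in $\mathbb{P}^3_x\times\mathbb{P}^3_u\times\mathbb{P}^1_\lambda\times\mathbb{P}^2_y$, forming the full Jacobian matrix $A$ of the equations \eqref{crit_curv_equations} augmented by $\nabla g$, reading off the multidegrees of its homogenized entries so that $A$ becomes a map of direct sums of line bundles $B_1\to B_2$, evaluating Porteous' formula for the rank-degeneracy locus in Macaulay2, and intersecting with $Y$ by B\'ezout. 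This yields $2796d^3-6444d^2+3696d$, which is then divided by $8$ because a $(\mathbb{Z}/2\mathbb{Z})^3$ sign action on $(u,\lambda,y)$ identifies solutions in groups of eight. In particular, the half-integer coefficients $\tfrac{699}{2}$ and $\tfrac{1611}{2}$ come from this division by $8$, not---as you speculate---from the square root in $g=\pm y_1/\sqrt{\eta}$ or from the umbilic branch divisor.

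The genuine gap is twofold. First, your proposal is a program rather than a proof: the twisting bundle $L$, the Chern classes of $\mathcal C$, the multiplicities of $dg$ along the excess loci, and the smoothness of $\mathcal C$ (which you yourself flag as likely failing along the umbilics) are all left open, and the final sentence ``matching the intersection numbers produces the stated bound'' is an assertion, not a derivation. Second, and more fundamentally, there is no reason your intrinsic count on a double cover of $X$ inside $\mathbb{P}(TX)$ would reproduce this particular polynomial even if completed. The stated bound is an artifact of the paper's specific compactification and equations; it is very far from sharp (for $d=2$ it gives $498$ against the true count of $18$ from \Cref{thm_cc}), and a careful intrinsic computation of $c_2(\Omega^1_{\mathcal C}\otimes L)$ would be expected to produce a different, almost certainly smaller, cubic. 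So completing your route would prove a different (possibly better) theorem, but it cannot be said to prove the statement as given, whose content is the specific bound $\tfrac{699}{2}d^3-\tfrac{1611}{2}d^2+462d$.
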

We prove the theorem at the end of the section.

Let us first remark that the formula is stated in the case $n=3$, but can be computed for any $n \ge 3$ using the same process that we give in our proof below. However, for $n\geq 3$ we do not have a proof that a general hypersurface always has isolated complex critical curvature points. Experiments suggest the following conjecture.

\begin{conjecture}\label{conj_isolated_cc}
Let $n\geq 4$ and  $X\subset \mathbb R^n$ be a general algebraic hypersurface of degree $d$. Then, $X$ has isolated complex critical curvature points.
\end{conjecture}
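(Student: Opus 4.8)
The plan is to realize the complex critical curvature points as the projection of the critical locus of the single regular function $g=\lambda y_1$ (see \Cref{curv_proposition}) on the curvature variety $\mathcal C(f)$ from \eqref{def_C}, and to prove that, for generic $f\in\mathcal P_{n,d}$, this critical locus has the expected dimension away from the umbilical stratum. Since $\mathcal C(f)$ has dimension $n-1$ and imposing that a point be critical for $g$ is $n-1$ conditions, the expected dimension of the critical locus is $0$; the conjecture asserts that this expected dimension is actually achieved. One must be careful about what ``isolated'' means here: by \Cref{umbilical_are_cc} the umbilical variety $\overline U$ is contained in $\overline{Cr}$, and for $n\geq 4$ the umbilical locus of a general hypersurface has dimension $n-3>0$, since two eigenvalues of the real symmetric shape operator coincide in real codimension two, so that umbilics have codimension two in the $(n-1)$-dimensional $X$. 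Thus $\overline{Cr}$ always contains the positive-dimensional component $\overline U$, and the statement to be proved is that the remaining, non-umbilical critical curvature points form a zero-dimensional component of $\overline{Cr}$, exactly as in the $n=3$ situation underlying \Cref{thm:curvatureupperbound}.

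The main tool is parametrized transversality over the parameter space $\mathcal P_{n,d}$. First I would assemble the incidence variety
$$\mathcal I=\{(f,x,\lambda,u,y,\nu)\mid (x,\lambda,u,y)\in\mathcal C(f),\ dg=\sum_k\nu_k\,d(\mathrm{eqn}_k)\},$$
where the $\mathrm{eqn}_k$ are the defining equations of $\mathcal C(f)$ listed in \eqref{crit_curv_equations} and the $\nu_k$ are Lagrange multipliers encoding that $(x,\lambda,u,y)$ is a critical point of $g$ restricted to $\mathcal C(f)$. The key step is to show that, after removing the umbilical stratum, the projection $\mathcal I\to\mathcal P_{n,d}$ is dominant with generically finite fibers; equivalently, that the total space has a component of dimension $\dim\mathcal P_{n,d}$ dominating the base. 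By generic smoothness in characteristic zero, this reduces to a single Jacobian rank computation: at one explicit pair $(f_0,\text{critical point})$ one exhibits that the combined system $\{\mathrm{eqn}_k=0,\ dg-\sum_k\nu_k\,d(\mathrm{eqn}_k)=0\}$ has maximal rank, so that the fiber is zero-dimensional there and hence generically. A convenient choice of $f_0$ is a small perturbation of a quadric, where \Cref{def_m} and the explicit computations of \Cref{sec_cc}, now carried out in $n$ variables, give a handle on the principal curvatures and their derivatives.

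The hard part is the behavior over the umbilical stratum, which is genuinely new for $n\geq 4$ and is where the clean $n=3$ argument has no analogue. Over an umbilic the fiber of $\pi:\mathcal C(f)\to X$ jumps in dimension, because the principal directions $u$ belonging to a repeated eigenvalue sweep out a projective line; hence $\mathcal C(f)$ is singular along the umbilical preimage, and $dg$ automatically annihilates the fiber directions there, as $y_1$ is constant along a degenerate eigenspace. The danger is excess intersection: the critical-locus scheme could acquire spurious positive-dimensional components supported over $\overline U$, which would both violate isolatedness and corrupt any enumerative count. I would control this by passing to a resolution of singularities $\widetilde{\mathcal C}(f)\to\mathcal C(f)$, smooth of dimension $n-1$ for generic $f$, on which $g$ pulls back to a regular function; there the transversality argument runs cleanly, and one then proves that the critical points of the lift map to $\overline U$ together with finitely many non-umbilical points, i.e. that no critical component other than the preimage of $\overline U$ escapes to positive dimension. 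Establishing the smoothness of $\widetilde{\mathcal C}(f)$ and the absence of a positive-dimensional critical locus transverse to $\overline U$ is the crux; for $n=3$ it is vacuous because $\overline U$ is already finite, which is precisely why the conjecture is open for $n\geq 4$. A successful resolution of this step would, as a bonus, feed a Porteous-type computation on $\widetilde{\mathcal C}(f)$ yielding an explicit degree for the zero-dimensional component, paralleling \Cref{thm:curvatureupperbound}.
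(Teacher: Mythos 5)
The statement you set out to prove is \Cref{conj_isolated_cc}, which the paper does not prove: it is explicitly left as an open conjecture, supported only by experiments (and the paper immediately explains why the $n=3$ argument, \Cref{salmons_theorem} combined with \Cref{umbilical_are_cc}, has no analogue for $n\geq 4$: the umbilical locus is then positive-dimensional, of dimension $n-3$). So there is no proof in the paper to compare against, and your text must stand on its own as a complete argument. It does not. It is a research plan, and its decisive steps are asserted rather than established --- you say so yourself when you write that the smoothness of $\widetilde{\mathcal C}(f)$ and the absence of positive-dimensional critical components transverse to $\overline U$ ``is the crux'' and ``is precisely why the conjecture is open for $n\geq 4$.'' Correctly identifying the obstruction to an open problem is valuable framing (and your reading of ``isolated'' --- a zero-dimensional component of $\overline{Cr}$ alongside the forced positive-dimensional $\overline U$ --- matches the paper's intent), but it is not a proof.

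Beyond the admitted crux, three concrete gaps would remain even if that step were granted. First, the parametrized-transversality step is incomplete as stated: exhibiting one pair $(f_0,p)$ where the combined Jacobian has maximal rank controls only the irreducible component of the incidence variety $\mathcal I$ through that point; to conclude generic finiteness of the entire non-umbilical critical locus you need a global dimension bound on $\mathcal I$ (for instance by fibering over $(x,\lambda,u,y,\nu)$ and showing the set of admissible $f$ is a linear space of the expected codimension), which you do not carry out --- and the anchor example itself is missing, since the paper's explicit quadric computations (\Cref{thm_cc}, \Cref{def_m}, \Cref{sec_cc}) are done only for $n=3$, and you have not verified finiteness of the non-umbilical critical locus for any single hypersurface with $n\geq 4$. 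Second, the proposed resolution $\widetilde{\mathcal C}(f)\to\mathcal C(f)$ gives no mechanism for excluding excess components: nothing in your sketch prevents the critical scheme of the lifted $g$ from acquiring a positive-dimensional component supported over $\overline U$ but not contained in its preimage, which is exactly the scenario under which the conjecture would fail. Third, your entire argument works with the complex variety cut out by the Lagrange equations, whereas the paper's complex critical curvature points (\Cref{def_complex_points}) are defined as the Zariski closure $\overline{Cr}$ of the \emph{real} locus, taken for reasons the paper spells out (the sum-of-squares discriminant phenomenon). Finiteness of the complex critical scheme away from umbilics would transfer to $\overline{Cr}$ by containment, but the existence of a nonempty zero-dimensional component of $\overline{Cr}$ requires producing points in the closure of the real family, a real-geometric step your complex transversality framework does not address.
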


Note that for $n\geq 4$ hypersurfaces in~$\mathbb R^n$ always have a positive-dimensional subvariety of umbilical points. This follows from a simple count of dimensions: the codimension of $(n-1)\times (n-1)$ real symmetric matrices with repeated eigenvalues is equal to~$2$ (see, e.g., \cite[Lemma 1.1]{Arnold1972}) and for~$n\geq 4$ we have that~$2 < n-1$.

\Cref{thm:curvatureupperbound} provides an upper bound for the degree of the critical curvature locus rather than an exact formula. Evaluating our formula for small values of $d$ gives

\smallskip
\begin{center}
\begin{tabular}{|l|l|l|}
\hline
$d$ & $\tfrac{699}{2}d^3-\tfrac{1611}{2}d^2+462d$ & number of complex critical curvature points\\[0.25em]
\hline
2 & 498 & $18$\\
3&  3573 & $\geq 456$\\
4& 11328 & $\geq 1808$\\
\hline
\end{tabular}
\end{center}
\smallskip

The last column is established as follows: the first entry $18$ follows from \Cref{thm_cc}. This number is far from the number 498, which we get from our formula. For $d=3$ and $d=4$, we solve the critical curvature equations using HomotopyContinuation.jl \cite{BT2018} (the code is attached to the \texttt{arXiv} version of this article). We certify\footnote{Certification provides a computer-assisted proof that the number of complex critical curvature points of a general surface of degree $d=3$ in three-space is at least 456. Similarly, for $d=4$ and $1808$ critical curvature points.} \cite{BRT2020} $456$ critical curvature points for $d=3$ and $1808$ critical curvature points for $d=4$, which is again far from the values implied by our formula. This gap is not surprising given that we are using counting formulas for general equations while the critical curvature equations are highly structured.

Now, we prove \Cref{thm:curvatureupperbound}.

\begin{proof}[Proof of \Cref{thm:curvatureupperbound}]
Salmon's Theorem (\Cref{salmons_theorem}) and \Cref{umbilical_are_cc} together imply that $X$ has isolated complex critical curvature points.

Let us now consider the equations for complex critical curvature points for general $n$. Later we specialize to $n=3$.
We introduce the following shorthand notations for the polynomials in (\ref{crit_curv_equations}):
$$\alpha = \lambda^2(\nabla f\cdot \nabla f)-1, \quad \beta = \langle u, \nabla f(x)\rangle, \quad v =
\sum_{i=1}^{n} u_i^2-1, \quad w =
\mathrm{H}f(x) u + y_1u+y_2\nabla f.$$
Recall from \Cref{curv_proposition} that the curvature is given by
$$g(x,\lambda, u,y) = \lambda u^T \mathrm{H}f(x) u.$$
Following \Cref{remark_cc}, critical curvature points are images of the critical points of the function $g$ on the curvature variety, so we estimate the cardinality of the former by a count of the latter.  These are points $(x,\lambda,u,y)$ where the Jacobian of the system defining the curvature variety and the gradient of $g$ are linear dependent, i.e. where the following $(2n+3)\times (n+5)$-matrix $A$ of partial derivatives has rank at most $n+4$:

\begin{equation}\label{def_A}
A=\begin{pmatrix}
f_{x_1}&\alpha_{x_1}&\beta_{x_1}&0&w_{1,x_1}&\ldots&w_{n,x_1}&g_{x_1}\\
\vdots&\vdots&\vdots&\vdots&\vdots&\vdots&\vdots&\vdots\\
f_{x_n}&\alpha_{x_n}&\beta_{x_n}&0&w_{1,x_n}&\ldots&w_{n,x_n}&g_{x_n}\\
0&0&\beta_{u_1}&u_1&w_{1,u_1}&\ldots&w_{n,u_1}&g_{u_1}\\
\vdots&\vdots&\vdots&\vdots&\vdots&\vdots&\vdots&\vdots\\
0&0&\beta_{u_n}&u_n&w_{1,u_n}&\ldots&w_{n,u_n}&g_{u_n}\\
0&\alpha_{\lambda}&0&0&0&\ldots&0&g_{\lambda}\\
0&0&0&0&w_{1,y_1}&\ldots&w_{n,y_1}&0\\
0&0&0&0&w_{1,y_2}&\ldots&w_{n,y_2}&0\\
\end{pmatrix}.
\end{equation}
As explained in the discussion of Porteous' formula in \cite[Chapter 12]{3264}, this locus has codimension $n-1$, which is equal to the dimension of $X$. Thus, we can use this system of polynomial equations to obtain an upper bound for the number of isolated critical curvature points.
We pass to complex projective space: we now denote by
$${\mathbb P}\mathcal C(f)\subset \mathbb{P}_{x}^n\times\mathbb{P}_{u}^n\times\mathbb{P}_{\lambda}^1\times\mathbb{P}_{y}^2$$ the complex projective closure of the curvature variety $\mathcal C(f)$ from \eqref{def_C}.
This introduces homogenizing variables $x_0, u_0,\lambda_0$ and $y_0$. When we homogenize the entries of $A$, the multidegrees are given by the following matrix:
\smallskip

{\scriptsize
\[
\begin{pmatrix}
  \ (d-1,0,0,0)&(2d-3,0,1,0)&(d-2,1,0,0)&0&(d-2,1,0,1)&\ldots&(d-2,1,0,1)&(d-3,2,1,0)\ \\
&&&&&&&\\
\ (d-1,0,0,0)&(2d-3,0,1,0)&(d-2,1,0,0)&0&(d-2,1,0,1)&\ldots&(d-2,1,0,1)&(d-3,2,1,0)\ \\
0&0&(d-1,0,0,0)&(0,1,0,0)&(d-1,0,0,1)&\ldots&(d-1,0,0,1)&(d-2,1,1,0)\ \\
\vdots&\vdots&\vdots&\vdots&\vdots&\vdots&\vdots&\vdots\\
0&0&(d-1,0,0,0)&(0,1,0,0)&(d-1,0,0,1)&\ldots&(d-1,0,0,1)&(d-2,1,1,0)\ \\
0&(2d-2,0,0,0)&0&0&0&\ldots&0&(d-2,2,0,0)\ \\
0&0&0&0&(d-1,1,0,0)&\ldots&(d-1,1,0,0)&0\\
0&0&0&0&(d-1,1,0,0)&\ldots&(d-1,1,0,0)&0\\
\end{pmatrix}.
\]}
\smallskip

A matrix that defines a map $\mathcal{O}(a) \oplus \mathcal{O}(b) \to \mathcal{O}(c) \oplus \mathcal{O}(d)$ has entries of degrees 

\[ 
\begin{pmatrix}
    c-a & d-a  \\
     c-b & d-b \\ 
\end{pmatrix}. 
\] 

Thus $A$ is the matrix of the map of vector bundles $B_1\to B_2$ with
\begin{align*}
B_1 :=\; &n{\mathcal O}(1,0,0,0)\oplus n{\mathcal O}(0,1,0,0)\oplus{\mathcal O}(0,0,1,0)\oplus 2{\mathcal O}(0,0,0,1) \\
B_2 :=\; &{\mathcal O}(d,0,0,0)\oplus{\mathcal O}(d-1,1,0,0) \oplus{\mathcal O}(0,2,0,0)
\oplus{\mathcal O}(2d-2,0,1,0)\\
&\oplus n{\mathcal O}(d-1,1,0,1)\oplus{\mathcal O}(d-2,2,1,0).
\end{align*}
We use Macaulay2 \cite{M2} to evaluate Porteous' formula \cite[Theorem 12.4]{3264} in this case. We obtain the class of the locus where the matrix $A$ has rank at most $n+4$. Applying B\'ezout's formula for the intersection of this with ${\mathbb P}\mathcal C(f)$, we obtain for $n=3$ the upper bound
$$2796d^3-6444d^2+3696d$$
(the Macaulay2 code is attached to the \texttt{arXiv} version of this article). There is a $\mathbb Z/2\mathbb Z\times \mathbb Z/2\mathbb Z\times \mathbb Z/2\mathbb Z$ action on the solutions. The three generators of this group act by $(u_0,u,y_0,y_1,y_2)\mapsto (u_0,-u, y_0, y_1,-y_2)$ and $(u_0,u,y_0,y_1,y_2)\mapsto (u_0,-u, -y_0, -y_1,y_2)$  and $(\lambda_0, \lambda) \mapsto (\lambda_0,-\lambda)$.
So we can divide the count by $8$. This gives the stated formula. 
\end{proof}

\bigskip
\section{Curvature of Quadrics in Three-Space}\label{sec:quadrics}
We study the case of a quadric hypersurface $X\subset \mathbb R^3$ defined by a polynomial~$f$ of degree $2$.

Let $x=(x_1,x_2,x_3)$ and write $f(x) = (x-x_0)^T A (x-x_0) - a_0$ for some symmetric matrix $A\in\mathbb R^{3\times 3}$ and $x_0\in\mathbb R^3$ and $a_0\in\mathbb R$. Since curvature is invariant under rotation and translation, we can assume that $x_0=0$, that $A=\mathrm{diag}(a_1,a_2,a_3)$ is a diagonal matrix, and $a_0=1$.
We obtain coordinates so for the general quadric hypersurface $X$:
$$
X = \{x=(x_1,x_2,x_3)\in\mathbb R^3 \mid a_1x_1^2 + a_2x_2^2 + a_3x_3^2 - 1 = 0 \}.
$$

The goal of this section is to prove the following two results for quadrics. The proofs appear in \Cref{sec_quadrics_umbilics} and \Cref{sec_cc}.
\begin{theorem}\label{thm_quadrics_umbilics}
A general quadric surface $X=\{a_1x_1^2 + a_2x^2 + a_3x_3^2 = 1\}$ has 12 complex umbilical points. The number of real umbilical points is
\begin{itemize}
\item $4$, if $X$ is an ellipsoid ($a_1,a_2,a_3$ are positive) or a two-sheeted hyperboloid (one of the $a_i$ is positive and two are negative);
\item $0$, if $X$ is a one-sheeted hyperboloid (two of the $a_i$ are positive and one is negative) or empty ($a_1,a_2,a_3$ are negative).
\end{itemize}
\end{theorem}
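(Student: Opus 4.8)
The key simplification is that for a quadric $f(x)=x^{T}Ax-1$ with $A=\mathrm{diag}(a_1,a_2,a_3)$ one has $\nabla f(x)=2Ax$ and a \emph{constant} Hessian $\mathrm Hf(x)=2A$. Hence, by the formula $h_x(v)=-v^{T}\mathrm Hf(x)v$ from \eqref{def_h}, the second fundamental form at $x\in X$ is, up to the scalar $\lambda$, the restriction of the symmetric form $A$ to the tangent plane $T_xX=(Ax)^{\perp}$, and $x$ is an umbilic exactly when this restriction is a scalar multiple $\mu\cdot\mathrm{id}$ of the identity. My first step is to show that for distinct $a_i$ this forces $\mu\in\{a_1,a_2,a_3\}$ together with a vanishing coordinate. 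Choosing an orthonormal basis $t_1,t_2,\hat n$ of $\mathbb R^{3}$ adapted to $T_xX\oplus N_xX$, the matrix of $A$ takes the form $\left(\begin{smallmatrix}\mu & 0 & \ast\\ 0 & \mu & \ast\\ \ast & \ast & \ast\end{smallmatrix}\right)$, whose characteristic polynomial has $\mu$ as a root; since the eigenvalues of $A$ are the distinct $a_1,a_2,a_3$, we get $\mu=a_j$ for a unique $j$, and the associated eigenvector $e_j$ of $A$ lies in $T_xX$, i.e. $e_j^{T}Ax=a_jx_j=0$, so $x_j=0$.

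With $x_j=0$ fixed and the two remaining indices named $q,r$, I would substitute $T_xX=\langle e_j,\,t\rangle$ with $t=a_rx_re_q-a_qx_qe_r$ into the umbilic equation $t^{T}At=a_j\,t^{T}t$. Together with $f(x)=0$ this yields two equations that are \emph{linear} in the squared coordinates $Q:=x_q^{2}$ and $R:=x_r^{2}$, namely $a_q^{2}(a_r-a_j)Q+a_r^{2}(a_q-a_j)R=0$ and $a_qQ+a_rR=1$. For distinct nonzero $a_i$ the $2\times2$ coefficient determinant $a_ja_qa_r(a_r-a_q)$ is nonzero, so Cramer's rule gives a unique solution $(Q,R)$ with $Q,R\neq0$; extracting the two square roots produces exactly $4$ points in the hyperplane $\{x_j=0\}$, one umbilic count of $4$ per coordinate plane. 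To reach the complex total I would assemble these across the family: running $a$ over all real triples with distinct coordinates, the real umbilics sweep out, inside each hyperplane $\{x_j=0\}$, a three-dimensional piece of the incidence variety $U$, so its complex Zariski closure decomposes as $\overline U=V_1\cup V_2\cup V_3$ with $V_j\subset\{x_j=0\}$. The generic fibre of each $V_j\to\mathcal P_{3,2}$ consists of the $4$ complex solutions of the linear system above, and since the three hyperplanes are distinct and the solutions generically disjoint, the generic complex fibre of $\overline U$ has $4+4+4=12$ points (consistent with Salmon's $10d^3-28d^2+22d$ at $d=2$). Building $\overline U$ from the real locus in this way is exactly what sidesteps the obstruction flagged after \Cref{def_complex_points}: the real umbilics are not recovered by naively complexifying a single fibre because the relevant discriminant is a sum of squares.

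For the reality classification I would solve the linear system explicitly, obtaining $Q=\frac{a_r(a_j-a_q)}{a_ja_q(a_r-a_q)}$ and $R=\frac{a_q(a_r-a_j)}{a_ja_r(a_r-a_q)}$, whence $QR=\frac{(a_j-a_q)(a_r-a_j)}{a_j^{2}(a_r-a_q)^{2}}$. A point in $\{x_j=0\}$ is real iff $Q\ge0$ and $R\ge0$; since $QR\ge0$ forces $a_j$ to lie between $a_q$ and $a_r$, real umbilics can occur only in the plane of the \emph{median} eigenvalue. In that plane a short sign count (taking $a_q<a_j<a_r$) gives $\operatorname{sign}(Q)=\operatorname{sign}(R)=\operatorname{sign}(a_ja_qa_r)=\operatorname{sign}(\det A)$, so the $4$ points there are real iff $\det A>0$. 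Finally $\det A>0$ means zero or two negative $a_i$, i.e. an ellipsoid or a two-sheeted hyperboloid (yielding $4$ real umbilics), while $\det A<0$ means one or three negative $a_i$, i.e. a one-sheeted hyperboloid or the empty quadric (yielding $0$), which is precisely the asserted dichotomy.

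The step I expect to be most delicate is the passage to the complex count: one must argue that the complex Zariski closure is \emph{exactly} the union of the three hyperplane components and that no further components appear, so that the generic complex fibre equals $12$ rather than merely bounding it below. This requires verifying the genericity hypotheses — distinctness and nonvanishing of the $a_i$, nonvanishing of $Q,R$ (which fail only when some $a_q=a_j$), and that the real umbilic family is genuinely three-dimensional over each median region so its closure surjects onto $\mathcal P_{3,2}$. The reduction to a coordinate hyperplane, the linear solve, and the sign bookkeeping are then entirely routine.
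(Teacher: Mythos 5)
Your proposal is correct, and it reaches the decisive equations by a genuinely different route than the paper. The paper first derives the curvature polynomial $m_x(y_1)$ of \Cref{def_m}, characterizes umbilics by the vanishing of its discriminant, factors that quartic into four complex planes, and uses its sum-of-squares structure to conclude that the \emph{real} umbilics are exactly the singular points of the discriminant locus, i.e.\ lie on the three pairs of lines \eqref{lines}; the count $12$ then comes from intersecting those six lines with the quadric, and reality is decided by explicit substitution after ordering $a_1\le a_2\le a_3$. You instead characterize an umbilic as a point where the restriction of $A$ to $T_xX$ is proportional to the induced metric, and prove by a spectral argument that the proportionality constant must be an eigenvalue $a_j$ of $A$ with $e_j\in T_xX$, forcing $x_j=0$; your two linear equations in $(x_q^2,x_r^2)$ are then precisely the equations of the lines \eqref{lines}. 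Your route buys a conceptual a priori reason why umbilics sit in coordinate planes (the umbilical analogue of \Cref{distinctai}), avoids computing and factoring the discriminant altogether, and yields the symmetric reality criterion $\det A>0$ in the median plane --- cleaner than the paper's ordering-based condition $a_2(a_1-a_3)>0$, which as printed even suffers a sign slip from clearing denominators (the inequality direction depends on the sign of $a_1(a_2-a_3)$), though its final classification agrees with yours. The paper's route buys structural insight that your argument hides: the sum-of-squares identity makes explicit the phenomenon motivating \Cref{def_complex_points}, namely that the naive complexification (discriminant quartic intersected with the quadric, a curve) strictly contains the umbilics, whose real points are its singular points; and the polynomial $m_x$ is reused for the critical-curvature analysis. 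Two small points to tighten in your write-up: first, the claim that $e_j\in T_xX$ needs the one-line verification that, writing $A$ in the adapted basis with off-diagonal block $v$, the $\mu$-eigenspace contains a vector supported on the tangent directions when $v\neq 0$, while $v=0$ would force a repeated eigenvalue of $A$; second, the decomposition $\overline U=V_1\cup V_2\cup V_3$ is not literally exact, since spheres $a_1=a_2=a_3$ (where every point is umbilical) contribute a further three-dimensional component of $\overline U$ --- but as that component lies over a proper subvariety of the parameter space it does not affect the generic fiber, so your count of $12$ stands. On this closure step the paper is no more explicit than you are, so your sketch matches its level of rigor.
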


The number of complex umbilics was already known by Salmon; see \Cref{salmons_theorem}. The number of real umbilical points is a new result to the best of our knowledge.\enlargethispage{\baselineskip}

\begin{theorem}\label{thm_cc}
A general quadric surface $X=\{a_1x_1^2 + a_2x^2 + a_3x_3^2 = 1\}$ has $18$ complex critical curvature points. The number of real critical curvature points is
\begin{itemize}
\item $10$, if $X$ is an ellipsoid ($a_1,a_2,a_3$ are positive);
\item $4$, if $X$ is a one-sheeted hyperboloid (two of the $a_i$ are positive and one is negative);
\item $6$, if $X$ is a two-sheeted hyperboloid (one of the $a_i$ is positive and two are negative);
\item $0$, if $X$ is empty ($a_1,a_2,a_3$ are negative).
\end{itemize}
\end{theorem}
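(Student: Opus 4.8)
The plan is to prove the theorem by exhibiting the $18$ complex critical curvature points explicitly as two symmetry-distinguished families, and then reading off which of them are real in each signature. Since $f$ is a quadric, $\mathrm{H}f\equiv 2A$ is constant and $\nabla f(x)=2Ax$, so the equations defining $\mathcal{C}(f)$ and the condition that $(x,\lambda,u,y)$ be a critical point of $g$ simplify considerably. Moreover, by \Cref{distinctai} every complex critical curvature point of a quadric with distinct $a_i$ lies on one of the coordinate planes $\{x_j=0\}$, so it suffices to analyze the three principal sections $C_j:=X\cap\{x_j=0\}$.

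First I would identify two families of critical curvature points. (i) Every complex umbilic is a critical curvature point by \Cref{umbilical_are_cc}, and there are $12$ of them by \Cref{thm_quadrics_umbilics}; for general $a$ these distribute as four in each coordinate plane, each with a single vanishing coordinate. (ii) The six \emph{axis vertices}, i.e.\ the points of $X$ lying on a coordinate axis (two vanishing coordinates), are critical curvature points: the two reflections fixing such a point (isometries preserving $X$) act on the two-dimensional tangent space as $\mathrm{diag}(-1,1)$ and $\mathrm{diag}(1,-1)$, whose only common fixed vector is $0$; since principal curvatures are isometry invariant, the gradient of each principal curvature is fixed by both reflections and hence vanishes there. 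For general $a$ these two families are disjoint, giving $12+6=18$ points.

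The crux is to show these are \emph{all} of the complex critical curvature points, and here I would work section by section. On $C_j$ the plane $\{x_j=0\}$ contains the normal direction $Ax$ (its $j$-th entry $a_jx_j$ vanishes), so $C_j$ is a normal section and one principal curvature along it equals the planar curvature of the conic $C_j$; the reflection $x_j\mapsto -x_j$ fixes $C_j$ pointwise while reversing the transverse tangent direction, so the transverse derivative of each principal curvature vanishes identically on $C_j$. Consequently a point of $C_j$ is a critical curvature point exactly when one of the two principal curvatures is critical \emph{along} the conic, a one-variable condition. Parametrizing $C_j$ rationally and clearing denominators turns this into a univariate polynomial whose roots I would count and identify: the critical points of the in-plane curvature are the four vertices of $C_j$ (precisely the four axis vertices on the other two axes), and the critical points of the transverse principal curvature are the four umbilics lying in $\{x_j=0\}$, giving eight critical curvature points on each $C_j$. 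Summing over the three sections and subtracting the six axis vertices, each shared by two sections, yields $3\cdot 8-6=18$. I expect the main obstacle to be exactly this univariate count: proving the degree of the critical-point polynomial is as claimed and that its roots are precisely the vertices and umbilics, with multiplicity one for general $a$, so that no spurious critical points appear and the three sections glue consistently.

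Finally, the reality statement follows by inspecting the two families. By \Cref{thm_quadrics_umbilics} the number of real umbilics is $4$ for the ellipsoid and two-sheeted hyperboloid and $0$ for the one-sheeted hyperboloid and the empty quadric. An axis vertex on the $j$-th axis solves $a_jx_j^2=1$ with the other coordinates zero, hence is real precisely when $a_j>0$; the number of real axis vertices is therefore $2\cdot\#\{j:a_j>0\}$, equal to $6,4,2,0$ in the four signatures. Adding the two contributions gives $6+4=10$ (ellipsoid), $4+0=4$ (one-sheeted), $2+4=6$ (two-sheeted), and $0+0=0$ (empty), as claimed.
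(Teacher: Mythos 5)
Your identification of the $18$ points ($12$ umbilics plus $6$ axis vertices), your appeal to \Cref{distinctai} to confine everything to the coordinate sections, and your reality count all agree with the paper; the double-reflection argument that the axis vertices are critical curvature points is also sound (for real vertices). The paper takes a different route at the decisive step: it substitutes $x_3=0$ into the polynomial system of \Cref{prop_equations_cc} and computes the minimal primes with Macaulay2, which automatically lists \emph{all} components; the off-axis components it finds are exactly the umbilics.

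The step you yourself flag as the crux is, however, not just unexecuted but based on a false identification, and this is a genuine gap. On $C_j$ (say $\{j,k,l\}=\{1,2,3\}$) the transverse direction $e_j$ is a unit tangent vector, so by \Cref{curv_proposition} the transverse principal curvature is $\pm 2a_j/\sqrt{\eta(p)}$ with $\eta(p)=4(a_k^2x_k^2+a_l^2x_l^2)$. Its critical points along the conic $\{a_kx_k^2+a_lx_l^2=1\}$ are the critical points of $\eta$ on that conic, and the Lagrange conditions
\begin{equation*}
a_kx_k(a_k-\mu)=0,\qquad a_lx_l(a_l-\mu)=0
\end{equation*}
force $x_kx_l=0$ since $a_k\neq a_l$: the critical points of the \emph{transverse} curvature along $C_j$ are the four \emph{vertices}, not the four umbilics. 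The in-plane curvature of the conic is likewise critical exactly at the four vertices. So if you carried out your univariate computation as planned, both polynomials would return only vertices and your section-by-section total would be $6$, not $18$. The underlying reason is that your equivalence ``a point of $C_j$ is a critical curvature point exactly when one of the two principal curvatures is critical along the conic'' fails precisely at umbilics: there the two smooth curvature branches cross, the fiber of $\pi$ over the point in $\mathcal C(f)$ becomes positive-dimensional, and the umbilic is a critical curvature point through the mechanism in the proof of \Cref{umbilical_are_cc} (some line of curvature \emph{oblique} to the section has critical curvature), not because either section-adapted normal curvature is critical along $C_j$. The argument can be repaired: assert the equivalence only at non-umbilic points of $C_j$ (where it is valid by your reflection argument and smoothness of the branches), conclude that non-umbilic critical curvature points on $C_j$ are exactly the four vertices, and adjoin the umbilics separately via \Cref{umbilical_are_cc}; this gives $8$ points per section and $3\cdot 8-6=18$. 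Finally, note that your symmetry arguments are real-differential-geometric, while the theorem counts points of the complex variety $\overline{Cr}$; for the non-real vertices (axes with $a_j<0$) and for ruling out non-real extraneous solutions you would additionally need an algebraic formulation on the curvature variety or a Zariski-density/closure argument — which is exactly what the paper's computer-algebra decomposition supplies.
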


To prove the two theorems, we first set up a framework for quadric hypersurfaces in $\mathbb R^n$.
\begin{lemma}\label{def_m}
Consider a quadric hypersurface $X\subset \mathbb R^n$, defined by the quadratic polynomial $a_1x_1^2 + \cdots + a_nx_n^2 - 1$ in the variables $x=(x_1,\ldots, x_n)$ and coefficients $a=(a_1,\ldots,a_n)$. For~$x\in X$ define the polynomial
\begin{equation*}
m_x(y_1) := \sum_{j=0}^{n-1} \mu_j(x) y_1^{n-1-j},
\end{equation*}
where
$
\mu_j(x) := 2^{j-n+1} \sum_{i=1}^n s_j(a - a_ie_i)a_i^2x_i^2
$
and $s_j$ is the $j$-th symmetric polynomial in $n$ variables, $e_i$ is the $i$-th standard basis vector. Then, we have 
$$m_x(y_1)=0 \quad \Longleftrightarrow\quad \pm \, \frac{y_1}{\, 2\sqrt{(a_1^2x_1^2 + \cdots + a_n^2x_n^2)}\, } \text{ is a principal curvature of $X$ at $x$}.$$
\end{lemma}
\begin{proof}
The gradient of $f(x)=a_1x_1^2 + \cdots + a_nx_n^2- 1$ at a point $x$ is $\nabla f(x) = 2\mathrm{diag}(a)x$ and the Hessian is $\mathrm{H}f(x) = 2\mathrm{diag}(a)$.
Therefore, the curvature variety $\mathcal C(f)$ from~\eqref{def_C} for the quadric $X=\{f=0\}$ is defined by the equations
\begin{align}\label{quadric_eqns}
a_1x_1^2 + \cdots + a_nx_n^2 &= 1\\\nonumber
4\lambda^2 (a_1^2x_1^2 + \cdots + a_n^2x_n^2) &= 1\\\nonumber
u_1a_1x_1 + \cdots + u_na_nx_n &=0 \\\nonumber
u_1^2+\cdots+u_n^2&=1\\\nonumber
2\mathrm{diag}(a) u + y_1u+2y_2\mathrm{diag}(a)x &= 0.
\end{align}
When $y_2\not= 0$ we write the last equation as $$(\mathrm{diag}(a) + \tfrac{1}{2}y_1\mathbf 1_n)u =-y_2\mathrm{diag}(a)x.$$ We now multiply the $i$-th entry of this vector equation with $\prod_{j\neq i} (a_j + \tfrac{1}{2} y_1)$ and obtain

$$\big(\Pi_{j=1}^n (a_j + \tfrac{1}{2} y_1)\big)\cdot u = -y_2\, \mathrm{diag}(a_i\Pi_{j\neq i} (a_j + \tfrac{1}{2} y_1)) x.$$
Multiplying both sides by $x^T\mathrm{diag}(a)$, assuming $y_2\not=0$, and using from the third equation in \eqref{quadric_eqns} that $u^T \mathrm{diag}(a)x = u_1a_1x_1+\cdots +u_na_nx_n=0$,
we get  the polynomial equation
\begin{equation}\label{m_eq}a_1^2x_1^2\Pi_{j\neq 1} (a_j + \tfrac{1}{2} y_1)  + \cdots + a_n^2x_n^2\Pi_{j\neq n} (a_j + \tfrac{1}{2} y_1)  = 0.
\end{equation}
The left-hand side of this equation is $m_x(y_1)$.

Let us now assume that $y_2=0$.
Then, the last equation in \eqref{quadric_eqns} holds for every $u$ such that either $u_i=0$ or $y_1=-2a_i$. Let 
$$I:=\{1\leq i\leq n\mid y_1= -2a_i\}.$$ There are now two cases to consider: (1) $|I| \geq 2$. In this case, every product on the left-hand side of \eqref{m_eq} vanishes, so $m_x(y_1)=0$; (2) $|I| = 1$. Then, let $I=\{i\}$. We have $u_j=0$ for $j\neq i$. The third equation in~\eqref{quadric_eqns} becomes $u_ia_ix_i=0$. Since $u\neq 0$, we have $u_i\neq 0$, so $a_ix_i= 0$. Therefore, the left-hand side of \eqref{m_eq} also vanishes in this case, so that $m_x(y_1)=0$.

Let now $(x,u,\lambda,y)\in\mathcal C(f)$. We have shown that this implies $m_x(y_1)=0$. Furthermore, by \Cref{curv_proposition},  $g(x,u,\lambda,y)=\lambda y_1$ is a principal curvature of $X=\{f=0\}$ at $x$. For every $x\in X$, there are exactly $n-1$ principal curvatures (counted with multiplicity). Since $m_x$ has degree $n-1$, all zeros of $m_x$ actually correspond to critical curvature. This shows that, $m_x(y_1)=0$, if and only if 
$$\lambda y_1 = \pm \frac{y_1}{\Vert \nabla f(x)\Vert}  =  \pm \frac{y_1}{\Vert 2(a_1x_1,\ldots,a_nx_n)\Vert}$$
 is a principal curvature of $X$ at $x$.
\end{proof}

\subsection{Umbilical Points of Quadrics}\label{sec_quadrics_umbilics}
In this section we prove \Cref{thm_quadrics_umbilics}.
Umbilical points are points on $X$ where we have a principal curvature~$\lambda y_1$ of higher order. This means that at $x\in X$ the polynomial $m_x$ from \Cref{def_m} has a zero of order at least two.

We use \Cref{def_m} in the case $n=3$; i.e., quadrics in three-space. The coefficients of the polynomial $$m_x(y_1) = \mu_0(x)y_1^2 + \mu_1(x)y_1 + \mu_2(x)$$
are in this case
\begin{equation}
\begin{aligned}
\label{mus}
\mu_0(x) &= \tfrac{1}{4}(a_1^2x_1^2 + a_2^2x_2^2 + a_3^2x_3^2),\\
\mu_1(x) &= \tfrac{1}{2}(a_1^2x_1^2 (a_2+a_3) + a_2^2x_2^2 (a_1+a_3) + a_3^2x_3^2(a_1+a_2) ),\\
\mu_2(x) &= a_1^2x_1^2 a_2a_3 + a_2^2x_2^2 a_1a_3 + a_3^2x_3^2a_1a_2,
\end{aligned}
\end{equation}
and we can use the discriminant for quadratic polynomials to see that umbilics on the quadric surface $X = \{a_1x_1^2 + a_2x_2^2 + a_3x_3^2 = 1\}$ are points such that $$\mu_1(x)^2 - 4\mu_0(x)\mu_2(x) = 0.$$ Using~\eqref{mus} we can write this explicitly as
\begin{equation}
\begin{aligned}\label{disc_umbilical}
\mu_1(x)^2 - 4\mu_0(x)\mu_2(x) \ = \ & \frac{1}{4}a_1^4(a_2-a_3)^2x_1^4 + \frac{1}{4}a_2^4(a_1-a_3)^2x_2^4 + \frac{1}{4}a_3^4(a_1-a_2)^2x_3^4\\
& - \frac{1}{2}a_1^2a_2^2(a_3-a_1)(a_2-a_3) x_1^2x_2^2\\
& - \frac{1}{2}a_1^2a_3^2(a_1-a_2)(a_2-a_3) x_1^2x_3^2\\
& - \frac{1}{2}a_2^2a_3^2(a_1-a_2)(a_3-a_1) x_2^2x_3^2.
\end{aligned}
\end{equation}
This polynomial factors completely over the complex numbers:
\begin{align*}
\mu_1(x)^2 - 4\mu_0(x)\mu_2(x) \ = \ &
 \frac{1}{4}(a_1\sqrt{a_2-a_3}\cdot x_1+a_2\sqrt{a_3-a_1}\cdot x_2+a_3\sqrt{a_1-a_2}\cdot x_3)\\
&\cdot (a_1\sqrt{a_2-a_3}\cdot x_1-a_2\sqrt{a_3-a_1}\cdot x_2+a_3\sqrt{a_1-a_2}\cdot x_3)\\
&\cdot (a_1\sqrt{a_2-a_3}\cdot x_1+a_2\sqrt{a_3-a_1}\cdot x_2-a_3\sqrt{a_1-a_2}\cdot x_3)\\
&\cdot (a_1\sqrt{a_2-a_3}\cdot x_1-a_2\sqrt{a_3-a_1}\cdot x_2-a_3\sqrt{a_1-a_2}\cdot x_3).
\end{align*}
The pairwise common zeros of the four factors are the following three pairs of lines through the origin:
\begin{equation}
\begin{aligned}\label{lines}
 x_1 &= a_2^2(a_3-a_1)x_2^2-a_3^2(a_1-a_2)x_3^2=0 \\ 
x_2 &= a_1^2(a_2-a_3)x_1^2-a_3^2(a_1-a_2)x_3^2=0 \\ 
x_3 &= a_1^2(a_2-a_3)x_1^2-a_2^2(a_3-a_1)x_2^2=0
\end{aligned}
\end{equation}
of which $2$ are real.  These two lines may or may not intersect the quadric in real points, so there are $0,2$ or $4$ real umbilical points among the $12$ complex intersection points of these lines with the complex quadric surface.

To determine when we have which number of real umbilical points, without loss of generality, we can assume that the coefficients are ordered as $a_1\leq a_2\leq a_3$. We can also assume that $a_3>0$, since otherwise the real locus of the quadric is empty. We can write \eqref{disc_umbilical} as a \emph{sum-of-squares}:
\begin{equation}
\begin{aligned}\label{disc_umbilical2}
\mu_1(x)^2 - 4\mu_0(x)\mu_2(x) \ = \ & \frac{1}{4}(a_1^2(a_2-a_3)x_1^2 + a_3^2(a_2-a_1)x_3^2)^2 \\
& + \frac{1}{4}a_2^4(a_1-a_3)^2x_2^4\\
& + \frac{1}{2}a_1^2a_2^3(a_3-a_1)(a_3-a_2) x_1^2x_2^2\\
& + \frac{1}{2}a_2^2a_3^3(a_1-a_2)(a_1-a_3) x_2^2x_3^2.
\end{aligned}
\end{equation}
\begin{remark}
The fact that \eqref{disc_umbilical2} is a sum-of-squares shows that real umbilical points correspond to singular solutions of $\mu_1(x)^2 - 4\mu_0(x)\mu_2(x)=0$.
\end{remark}

We can see from \eqref{disc_umbilical2} that if one of the $a_i$ is zero or if two of the $a_i$ are equal, then we get infinitely many umbilical points. In all other cases, we get finitely many real umbilical points as follows: from the first and second equations in \eqref{lines} we have
$$x_2=0\quad \text{and}\quad x_1 =\pm   \frac{a_3}{a_1}\sqrt{\frac{a_2-a_1}{a_3-a_2}} \, x_3.$$ Note that the expression under the square root is positive by assumption. Plugging this into $a_1x_1^2 + a_2x_2^2 + a_3x_3^2 = 1$ gives
$\left(\frac{a_3^2}{a_1}\frac{a_2-a_1}{a_3-a_1} +a_3\right) x_3^2 = 1.$
Therefore, we have four real umbilical point if and only if $$\frac{a_3^2}{a_1}\, \frac{a_2-a_1}{a_3-a_2} + a_3>0\quad\Longleftrightarrow\quad\frac{a_3^2(a_2-a_1) + a_3a_1(a_3-a_2)}{a_1(a_3-a_2)} = \frac{a_2a_3(a_3 - a_1)}{a_1(a_3-a_2)}  >0.$$
This is the case if and only if $a_1,a_2,a_3>0$ or $a_3>0$ and $a_1,a_2<0$.

\subsection{Critical Curvature Points of Quadrics}\label{sec_cc}
In this section, we prove \Cref{thm_cc}. Critical curvature points are points on $X$ where one of the principal curvatures is critical.
Recall from \Cref{def_points} the following algebraic formulation: there exist $\lambda, u, y$ with $(x,\lambda, u, y)\in\mathcal C(f)$, the curvature variety, such that $(x,\lambda, u, y)$ is a critical point of $g$.
In the general setting, where $X\subset \mathbb R^n$ we eliminate $\lambda$ and~$u$ using the following proposition. The formulation of the next proposition uses the notation $\eta(x)= \langle \nabla f(x), \nabla f(x)\rangle $ that we have established in \Cref{def_eta}.
\begin{proposition}\label{prop_equations_cc}
Consider the quadric hypersurface $X\subset \mathbb R^n$ defined by the polynomial $f(x)=a_1x_1^2 + \cdots + a_nx_n^2-1$. For $(x,\lambda, u, y)\in\mathcal C(f)$ let
$$q(x,y_1):=8\,y_1 \,m_x'(y_1) = 8\sum_{j=0}^{n-2}(n-j-2)\mu_j(x)y_1^{n-1-j}$$
(here, $m_x'(y_1)$ means the derivative of the polynomial $m_x(y_1)$ in $y_1$).
The $x$-coordinates of the zeros of the system of $n+3$ polynomial equations in the $n+3$ variables $(x_1,\ldots,x_n,\lambda,y_1,t)$
$$
\begin{bmatrix}
f(x)\\[0.4em]
m_x(y_1)\\[0.4em]
\lambda^2\eta(x)-1\\[0.4em]
\eta(x)\sum_{j=0}^{n-1}  y_1^{n-1-j} \, \nabla  \mu_j(x) +(t\mathbf 1_n +  q(x,y_1)\mathrm{diag}(a)) \mathrm{diag}(a)x
\end{bmatrix} =0.
$$
are exactly the critical curvature points of $X$.
\end{proposition}
We prove the proposition at the end of this section. Let us first see how to use it to prove \Cref{thm_cc}.
In the case $n=3$, \Cref{prop_equations_cc} provides 6 polynomial equations in the 6 variables $(x_1,x_2,x_3,\lambda,y_1,t)$.
Critical curvature points are points $x=(x_1,x_2,x_3)\in\mathbb C^3$ for which this set of $6$ polynomial equations has a solution $(\lambda,y_1,t)$.

Eliminating $\lambda$ from these equations is straightforward.
But eliminating $y_1$ and $t$ as in the above case seems intractable. With the following lemma, we can eliminate immediately one of the $x_i$.

\begin{lemma}\label{distinctai}
A quadric surface $X=\{a_1x_1^2 + a_2x_2^2 + a_3x_3^2 = 1 \}$ with $a_1,a_2,a_3\not= 0$ and pairwise distinct has critical curvature points only in coordinate planes.
\end{lemma}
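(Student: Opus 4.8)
The plan is to argue by contradiction: suppose $x=(x_1,x_2,x_3)$ is a critical curvature point with $x_1x_2x_3\neq 0$, and show this is impossible. The tool is the system in \Cref{prop_equations_cc}, whose last entry is the vector equation
$$\eta(x)\sum_{j=0}^{2} y_1^{2-j}\,\nabla\mu_j(x) + \big(t\,\mathbf 1_3 + q(x,y_1)\,\mathrm{diag}(a)\big)\mathrm{diag}(a)\,x = 0.$$
First I would write this equation coordinatewise. Because each $\mu_j$ is a sum of terms $a_i^2x_i^2$ times a symmetric function of the remaining $a$'s, the partial derivative $\partial\mu_j/\partial x_i$ is a scalar multiple of $a_i^2x_i$, so the $i$-th coordinate of $\eta\sum_j y_1^{2-j}\nabla\mu_j$ equals $\tfrac12\eta(x)\,a_i^2x_i\,(y_1+2a_j)(y_1+2a_k)$, where $\{j,k\}$ is the complementary index pair. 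The second summand contributes $(t+qa_i)a_ix_i$ to the $i$-th coordinate. Thus each coordinate equation carries a common factor $a_ix_i$, which — using the contradiction hypothesis $x_i\neq 0$ together with $a_i\neq 0$ — can be cancelled to leave the three scalar equations
$$\tfrac12\eta(x)\,a_i\,(y_1+2a_j)(y_1+2a_k) + q\,a_i + t = 0,\qquad i=1,2,3.$$

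Next I would eliminate the auxiliary unknowns $t$ and $q$. Writing $g_i:=(y_1+2a_j)(y_1+2a_k)$, the three equations are linear in $t$ and $q$; subtracting them in pairs removes $t$, and the key simplification is the symmetric identity $a_i(a_ja_k)=a_1a_2a_3$, which makes the divided difference collapse to $\frac{a_ig_i-a_{i'}g_{i'}}{a_i-a_{i'}} = y_1\big(y_1+2a_{i''}\big)$, where $i''$ is the third index. Here I use that the $a_i$ are pairwise distinct (to divide by $a_i-a_{i'}$) and that $\eta(x)\neq 0$ (which holds because $\lambda^2\eta(x)=1$ is one of the equations). Solving for $q$ from two different pairs and equating the results yields $y_1(y_1+2a_3)=y_1(y_1+2a_1)$, i.e.\ $2(a_3-a_1)y_1=0$, and hence $y_1=0$.

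Finally I would feed $y_1=0$ back into the remaining equations to reach a contradiction. The equation $m_x(y_1)=0$ from \Cref{def_m} forces $m_x(0)=\mu_2(x)=0$. But $\mu_2(x)=a_1a_2a_3\,(a_1x_1^2+a_2x_2^2+a_3x_3^2)$, and since $x$ lies on $X$ we have $a_1x_1^2+a_2x_2^2+a_3x_3^2=1$, so $\mu_2(x)=a_1a_2a_3\neq 0$. This contradiction shows that no critical curvature point can have all coordinates nonzero; equivalently, every critical curvature point lies in a coordinate plane.

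I expect the main obstacle to be organizational rather than conceptual: carrying out the elimination of $t$ and $q$ cleanly. The computation only becomes short once one spots the factorization of each coefficient quadratic as $\tfrac12(y_1+2a_j)(y_1+2a_k)$ together with the symmetry $a_i\,a_ja_k=a_1a_2a_3$; without these the three coordinate equations look like an unstructured overdetermined system in $(y_1,q,t)$. The remaining care points are bookkeeping ones — justifying each cancellation from the hypotheses ($x_i\neq 0$ under the contradiction assumption, $a_i\neq 0$ and pairwise distinct, and $\eta(x)\neq 0$ coming from $\lambda^2\eta(x)=1$) — all of which are guaranteed by the standing assumptions.
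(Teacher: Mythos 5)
Your proof is correct; all three computations it rests on check out: the coordinatewise factorization $\eta\sum_{j}y_1^{2-j}\,\partial\mu_j/\partial x_i=\tfrac12\eta\,a_i^2x_i(y_1+2a_j)(y_1+2a_k)$, the divided-difference identity $\frac{a_ig_i-a_{i'}g_{i'}}{a_i-a_{i'}}=y_1(y_1+2a_{i''})$ (the constant terms $4a_1a_2a_3$ cancel in the difference), and the evaluation $m_x(0)=\mu_2(x)=a_1a_2a_3(a_1x_1^2+a_2x_2^2+a_3x_3^2)=a_1a_2a_3\neq 0$ on $X$, where $m_x(y_1)=0$ is legitimately available to you as the second equation of the system in \Cref{prop_equations_cc}. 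Your first half coincides with the paper's proof: assume $x_1x_2x_3\neq 0$, divide the $i$-th coordinate of the last block of equations in \Cref{prop_equations_cc} by $a_ix_i$, and refute the resulting three scalar equations in $(y_1,q,t)$. The endgame, however, is genuinely different. The paper packages the three equations as a linear system $C\,(\eta y_1,\,2\eta,\,t)^T=-(q+\tfrac12\eta y_1^2)(a_1,a_2,a_3)^T$ with $C$ of rank $2$, and derives its contradiction from the solvability condition: pairing the left kernel of $C$ with $(a_1,a_2,a_3)^T$ yields $(a_1-a_2)(a_2-a_3)(a_1-a_3)=0$. For that to bite, the right-hand side must be nonzero, which the paper secures with the geometric side-claim that $y_1\neq 0$ ``because otherwise the quadric would be a cone,'' asserted without proof. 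You instead eliminate $t$ and $q$ outright, are forced to the case $y_1=0$, and then kill it purely algebraically from $m_x(0)=a_1a_2a_3\neq 0$. This buys self-containedness—no geometric appeal—and, as a by-product, your last step is exactly the missing justification for the paper's own claim that $y_1$ never vanishes on the curvature variety of such a quadric. There is also a neat swap in how the hypotheses are spent: the paper divides by the $a_i$ and lands its contradiction on pairwise distinctness, while you divide by the differences $a_i-a_{i'}$ and land yours on $a_1a_2a_3\neq 0$; both proofs use both hypotheses, just in opposite places.
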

\begin{proof}
We have
$\nabla  \mu_j(x) = \frac{1}{2^{1-j}}D_jx,$
where $$D_j=
\begin{bmatrix}
s_j(a-a_1e_1)a_1^2 & 0 & 0 \\
0 & s_j(a-a_2e_2)a_2^2 & 0\\
0 & 0 & s_j(a-a_3e_3)a_3^2
\end{bmatrix}
.$$
We also denote
$$M:=\begin{bmatrix}
ta_1+q(x,y_1)a_1^2 & 0 & 0 \\
0 & ta_2+q(x,y_1)a_2^2 & 0\\
0 & 0 & ta_3+q(x,y_1)a_3^2
\end{bmatrix}.$$
Let us use this to write the last $n=3$ equations in \Cref{prop_equations_cc} as follows:
\begin{equation}\label{distinctai_eq1}
\Big(\eta(x)\big( \tfrac{1}{2}\,y_1^2\,D_0+y_1 \, D_1+ 2\,D_2\big)   +
  M\Big)x = 0.
\end{equation}
We assume by way of contradiction that $x_1,x_2,x_3\not=0$. Then, in \eqref{distinctai_eq1} we can divide the $i$th equation by $x_i$. Furthermore, since $a_i\neq 0$ for $1\leq i\leq 3$  by assumption, we can also divide the $i$th equation by $a_i$. We get the following matrix equation:
\begin{equation}\label{eq_distinctai_1}\begin{bmatrix}
 a_1(a_2+a_3) &a_1a_2a_3& 1\\
a_2(a_1+a_3) &a_1a_2a_3& 1\\
a_3(a_1+a_2) &a_1a_2a_3& 1
\end{bmatrix}
\cdot
\begin{bmatrix}
\eta(x)y_1\\
2\eta(x)\\
t\\
\end{bmatrix}=-(q(x,y_1) + \tfrac{1}{2}\eta(x)y_1^2)\begin{bmatrix}
a_1\\
a_2\\
a_3
\end{bmatrix}.
\end{equation}
Let $C$ denote the $3\times 3$-matrix in this equation.
It has rank $2$. The right kernel of $C$ is
$$C\begin{bmatrix} 0 \\ 1 \\ -a_1a_2a_3\end{bmatrix} = 0.$$
We have $\eta(x)\neq 0$ because $\lambda^2\eta(x)=1$, and we have $y_1\neq 0$ because otherwise the quadric would be a cone.
Since the rank of $C$ is $2$, this shows that the right-hand side of \eqref{eq_distinctai_1} must not be zero. We also have the left kernel $$\begin{bmatrix}a_1(-a_2+a_3)& a_2(a_1-a_3)& a_3(-a_1+a_2)\end{bmatrix}\cdot C=0.$$
The matrix equations therefore have solutions only if
\[
\begin{bmatrix}a_1(-a_2+a_3)& a_2(a_1-a_3)& a_3(-a_1+a_2)\end{bmatrix}\cdot \begin{bmatrix}
a_1\\
a_2\\
a_3
\end{bmatrix}=2(a_1-a_2)(a_2-a_3)(a_1-a_3)=0.
\]
We have a contradiction and the lemma follows. \end{proof}

By \Cref{distinctai}, we need only to
consider possible critical points in coordinate planes.  We shall see that when $a_1,a_2,a_3$ are pairwise distinct real numbers, there are $18$ complex solutions to the critical curvature equations in the coordinate planes.

In fact, we may assume $x_3=0$, in which case the number of equations in Proposition \ref{prop_equations_cc} reduces to four.  Using Macaulay2 \cite{M2},
we find the minimal primes of the ideal generated by the four polynomials. The minimal primes contain one of the following sets of generators:

\begin{align*}
&(a_1),\; (a_2),\; (a_1-a_2),\; (x_1),\; (x_2),\;(y_1), \text{ and }
(x_2^2a_2a_3(a_1-a_2)+a_1(a_2-a_3))
\end{align*}
Of these the first two define cylinders, and the third a rotated conic section, all three with infinitely many critical points.  The fourth and fifth define the points of the quadric that lie on the coordinate axes.  These are solutions to the critical  curvature equations.  By symmetry, there are $6$ complex solutions on the coordinate axes.
The sixth set of generators contains~$y_1$.  The eigenvalue $y_1=0$ appears when a principal curvature vanishes, cf. Proposition 2.5, which happens only when the quadric $X$ is a cone.

Finally, for the seventh set of generators we get
$x_2^2  = a_1(a_3-a_2)/(a_2a_3(a_1-a_2)).$
The solution in $x_1^2$ is given by symmetry
This yields four complex solutions.
So there are $12$ complex solutions total in the three coordinate planes, outside the coordinate axes.

Assume now that $a_1<a_2<a_3$.
Only the four solutions in $x_2=0$ are real if $a_1>0$.
If $a_1<0<a_2$, no solutions are real.
If $a_2<0<a_3$, then four solutions are real in the plane given by $x_2=0$.

We summarize:  There are 18 complex critical curvature points on the quadric surface $a_1x_1^2+a_2x_2^2+a_3x_3^2=1$ with $a_1<a_2<a_3$, if $a_1,a_2,a_3\neq 0$ and pairwise distinct. All 18 critical curvature points are in the coordinate planes. The real critical curvature points are:
\begin{itemize}
\item Ellipsoid,  $a_1>0$:

$6$ points $(\pm 1/\sqrt{a_1},0,0),(0,\pm 1/\sqrt{a_2},0), (0,0,\pm 1/\sqrt{a_3})$ and $4$ points in $\{x_2=0\}$.
\item One-sheeted hyperboloid $a_1<0<a_2$:

$4$ points on coordinate axes and none outside of coordinate planes.
\item Two-sheeted hyperboloid, $a_2<0<a_3$:

$2$ points on coordinate axes and $4$ points in~$\{x_2=0\}$.
\end{itemize}

Finally, when two or three $a_i$ are equal, there are infinitely many critical curvature points because the quadric in this case contains circles. When one $a_i=0$, then there are also infinitely many critical curvature points because the quadric is a cone in this case. This proves \Cref{thm_cc}.

It remains to prove \Cref{prop_equations_cc}.

\begin{proof}[Proof of \Cref{prop_equations_cc}]
We know from
\Cref{curv_proposition} that the curvature at a point $x\in X$ can be written as $g(x,\lambda,u,y)=-\lambda y_1$. Suppose that $x$ is not an umbilical point of $X$. Then~$y_1$ is a simple zero of the equation $m_x(y_1)=0$ (see \Cref{def_m}), so locally $y_1=y_1(x)$ is a differentiable function of $x$.

By \Cref{def_m}, the function $m_x(y_1(x)) = \sum_{j=0}^{n-1} \mu_j(x) y_1(x)^{n-1-j}=0$ vanishes in a neighborhood of~$x\in X$. Therefore, the derivative of $m_x(y_1(x))$ with respect to $x$ must be in the normal space of $X$ at $x$. We get
\begin{equation*}
 \sum_{j=0}^{n-1}  y_1^{n-1-j}\,\nabla\mu_j(x) + m_x'(y_1)\,\nabla y_1(x)  = t_1\mathrm{diag}(a)x \quad \text{for some $t_1\in\mathbb R$}.
\end{equation*}
Next, we use that also $\lambda=\lambda(x)$ is locally a function of $x$, because $\lambda^2  \eta(x) = 1$. We have $$\eta(x) \nabla \lambda(x) + \lambda(x) \nabla \eta(x) = \eta(x)\nabla \lambda(x) + 8\lambda \mathrm{diag}(a)^2x= 0.$$
The curvature $g(x)=-\lambda(x) y_1(x)$ is locally a differentiable function of~$x$. Its derivative is
\begin{equation}\label{cc_eq11}
-\nabla g(x) = \lambda(x) \nabla y_1(x) + y_1(x) \nabla \lambda(x).
\end{equation}
In the following, we drop the notation for a term being a function of $x$ or $y_1$. 

Recall that $q=8\,y_1\,m_x'$.
Multiplying both sides of \eqref{cc_eq11} with $\eta \, m_x'$ we get the following polynomial equations:
\begin{equation}\label{cc_eq1}-(\eta\,m_x') \nabla g  =\lambda\Big(\eta\, t_1\,\mathrm{diag}(a)\,x -\eta\sum_{j=0}^{n-1}  y_1^{n-1-j}\,\nabla\mu_j -   q\, \mathrm{diag}(a)^2\,x\Big).
\end{equation}
These polynomial equations hold for all $x\in X$ that are not umbilical points. By continuity, they must hold on all of $X$.

We claim that $x$ is a critical curvature point if and only if
\begin{equation}\label{cc_eq2}
\lambda\Big(\eta\, t_1\,\mathrm{diag}(a)\,x -\eta\sum_{j=0}^{n-1} y_1^{n-1-j}\,\nabla\mu_j - q\, \mathrm{diag}(a)^2\,x\Big) = t_2\, \mathrm{diag}(a)\,x \quad \text{for some $t_2\in\mathbb R$.}
\end{equation}
By \Cref{umbilical_are_cc}, umbilical points are critical curvature points. We have two cases: First, if $x$ is an umbilical point, the left-hand side of \eqref{cc_eq1} vanishes, so that we can take $t_2=0$ in this case. Second, if $x$ is not an umbilical point, we have $m_x'(y_1)\neq 0$ (and we have $\eta\neq 0$ since $\lambda^2\eta=1$), so that \eqref{cc_eq2} in this case is equivalent to $\nabla g \in \mathbb R \cdot \nabla f$ by \eqref{cc_eq1}.
Now, we substitute $t_2 - \lambda\,\eta\, t_1$ by $\lambda t$, divide by $\lambda\neq 0$, and finally obtain
\begin{equation}\label{prop_equations_cc_eq1}
 \eta\sum_{j=0}^{n-1}
 y_1^{n-1-j}\,\nabla\mu_j  +(t\,\mathbf 1_n +  q\,\mathrm{diag}(a)) \mathrm{diag}(a)\,x =0.
\end{equation}
Together with the equation for $X$, namely $a_1x_1^2 + a_2x_2^2+a_3x_3^2=1$, the equation $m_x(y_1)=0$ that defines $y_1$ to be the curvature of $X$ at $x$, and $\lambda^2\eta(x)=1$ (which we use to have $\lambda\neq 0$ and $\eta(x)\neq 0$) the equations in \eqref{prop_equations_cc_eq1} define umbilical and critical curvature points on $X$.
\end{proof}

\bigskip
\section{Acknowledgments.}
M.\ Weinstein was supported by the National Science Foundation Mathematical Sciences Postdoctoral Research Fellowship. This material is based upon work supported by the National Science Foundation under Award No. 2103261.

P.\ Breiding is funded by the Deutsche Forschungsgemeinschaft (DFG, German Research Foundation) -- Projektnummer 445466444.

This project was initiated when M.\ Weinstein was visiting the Thematic Einstein Semester on
Algebraic Geometry: ``Varieties, Polyhedra, Computation'' in the Berlin Mathematics Research Center MATH+.\enlargethispage{\baselineskip}

We thank two anonymous referees for their feedback, which helped us improving the paper.
\bigskip

\noindent
{\bf Authors' addresses:}

\smallskip

\noindent Paul Breiding,
 \ University of Osnabr\"uck \hfill  {\tt pbreiding@uni-osnabrueck.de}

\noindent Kristian Ranestad, University of Oslo
\hfill {\tt ranestad@math.uio.no}

\noindent Madeleine Weinstein,
University of Puget Sound \hfill {\tt mweinstein@pugetsound.edu}

\bigskip

\end{document}